\newtheorem{theorem}{Theorem}[section]
\newtheorem{lemma}[theorem]{Lemma}
\newtheorem{corollary}[theorem]{Corollary}
\newtheorem{example}[theorem]{Example}
\theoremstyle{definition}
\newtheorem{definition}[theorem]{Definition}
\theoremstyle{remark}
\newtheorem{remark}[theorem]{Remark}
\newcommand{\Z}{\mathbb Z}
\newcommand{\C}{\mathbb C}
\newcommand{\p}{$\phi$}
\newcommand{\la}{$\lambda$}
\newcommand{\G}{$G$}
\newcommand{\A}{$\mathcal A$}
\def\Sha{\hbox{$\amalg\kern-.39em\amalg$}}
\renewcommand{\bar}{\overline}
\renewcommand{\tilde}{\widetilde}
\begin{document}

\title{Twisted Orthogonality Relations For  Certain
$\Z/m\Z$ -graded  Algebras}

\author{Prashant Arote}
\date{}
\maketitle

\begin{abstract}

In this paper we will study the notion of a Frobenius $\star$-algebra and prove some orthogonality relations for the irreducible characters of a Frobenius $\star$-algebra.
Then we will study  $\Z/m\Z$-graded Frobenius $\star$-algebras and  prove some twisted orthogonality relations for them. 
\end{abstract}
\section*{Introduction}
The goal of this paper is to define twisted characters for a special class of $\Z/m\Z$-graded Frobenius algebras and prove some orthogonality relations for them.

We begin by recalling the notion of a Frobenius algebra and a symmetric Frobenius algebra.
In this paper we will only deal with  symmetric Frobenius algebras.
Further we will give a description of the centre of a symmetric Frobenius algebra and recall some orthogonality relations for their irreducible characters. 
All of these results are found in \cite{frobeniusalgebra}.

In  section 2 we define a $\textit{Frobenius $\star$-algebra}$. 
We will prove that Frobenius $\star$-algebras are semisimple and derive some orthogonality relations for their irreducible characters.
An example of a Frobenius $\star$-algebra is the group ring $\C[G]$ of a finite group $G$. 
Another example is that of a (complexified) based ring, which is discussed in \cite{BasedRings}.

In section 3 we will recall the notion of group graded rings and group graded modules over a group graded ring.
Further we state some results about Clifford theory for group graded rings without proof. 
We refer to \cite{Cliffordtheory} for more details.

In section 4 we define the notion of a twisted character for a $\Z/m\Z$-graded  Frobenius $\star$-algebra and by applying the results from sections 2 and 3 we obtain some orthogonality relations for twisted characters of a $\Z/m\Z$-graded symmetric Frobenius $\star$-algebra.

Let us describe an example of twisted characters and orthogonality relations. 
Let $G$ be a finite group and $\sigma$ be a automorphism of $G$ of order $m$. 
Therefore we get an action of $\Z/m\Z$ on $G$ and on Irr($G$) (= set of all irreducible representations of G over $\C$ upto isomorphism).
Then $\C[G\rtimes\Z/m\Z]$ is a $\Z/m\Z$-graded Frobenius $\star$-algebra. If $\rho \in \mbox{Irr}(G)^{\Z/m\Z}$, then we can extend $\rho$ to a representation $\tilde{\rho}$ of $G\rtimes\Z/m\Z$.
In this case the twisted character $\tilde{\chi}_{\rho}$ of $G$ corresponding to $\rho$
is  $\tilde{\chi}_{\rho}(g) := \chi_{\tilde{\rho}}(g,\sigma)$.
Then the results of this paper imply that the set of twisted characters $\{\tilde{\chi}_{\rho}: \rho \in \mbox{Irr}(G)^{\Z/m\Z}\}$ forms an orthogonal basis for the space of $\sigma$-twisted class functions on $G$.

The twisted orthogonality relations are used by G. Lusztig in his theory of character sheaves for reductive groups over a finite field.
Orthogonality relations for twisted characters of the Grothendieck algebras of $\Z/m\Z$-graded categories was proved by T. Deshpande. For more details see \cite{ta}.

\section{\textbf{Frobenius Algebra}}

Now we begin by recalling the definition of a Frobenius algebra and some of its properties.
\begin{definition} 
A Frobenius algebra \A\ is a finite dimensional, unital, associative $\C$-algebra equipped with a linear functional $\lambda:\mathcal{A}\rightarrow \C$ such that the bilinear form
on \A\ defined by $\left( a ,b \right)=\lambda (ab)$ is nondegenerate. 
 Moreover if $\lambda(ab)=\lambda(ba),\ \forall\ a,\ b\in \mathcal{A}$, then \A\ is called as symmetric Frobenius algebra.
\end{definition}
\noindent
$\bullet$ A linear functional $\lambda$ on an algebra \A\ is said to be a nondegenerate if the bilinear form on \A\ defined by $\left( a ,b \right)=\lambda (ab)$ is nondegenerate.\\
$\bullet$ To make connection more explicit, we say that the pair (\A,\la) is a Frobenius algebra, if \la\ is a nondegenerate linear functional on \A.

As \A \ is finite dimensional,  \A$^{*}$=Hom$_{\mathbb{C}}(\mathcal{A},\mathbb{C})$\ is an (\A, \A)-bimodule, where for $a\in$\A \ and $\gamma\in\mathcal{A}^{*}$\ we define
$$
\begin{cases}
( a.\gamma)(x)=\gamma (xa) \\
( \gamma.a)(x)=\gamma (ax)
\end{cases}
\forall\ x \in \mathcal{A}
$$

There is an equivalent definition of a Frobenius algebra: A Frobenius algebra \A\ is a finite-dimensional, unital, associative $\mathbb{C}$-algebra  equipped with a left \A -module isomorphism $\phi$:\A$\rightarrow\mathcal{A}^{*}$. 
In this case it is denoted by the pair (\A,\p). 

The equivalence between the two definitions is given by:
\begin{gather*}
(\mathcal{A},\lambda)\ \rightsquigarrow\ \phi(a)=\lambda(- \cdot a) \\
(\mathcal{A},\phi)\ \rightsquigarrow\ \lambda (x)=\phi (1)(x).
\end{gather*}
\noindent
$\bullet$  (\A,\p) is a symmetric Frobenius algebra if and only if \p\ is a (\A,\A) bimodule isomorphism between \A \ and \A$^{*}$. 

\noindent
$\bullet$ If $((a_{i}), (b_{i}))$ is an ordered pair of bases for \A \ such that \p$(b_{i})(a_{j})=\delta_{ij}$, then we say that the bases $(a_{i}), (b_{i})$ are $\mathbf{\phi}\textbf{-dual}$. If $(a_{i})$ is a basis of  (\A,\p), then there exists a second basis ($b_{i}$) of (\A,\p) such that $(a_{i})$, $(b_{i})$ are \p -dual. 
\begin{lemma}
Let (a$_{i}$), (b$_{i}$) be \p -dual bases for a Frobenius algebra (\A, \p), 
and let $\gamma$ $\in$ \A $^{*}$. 
Then
\[ 
\phi  ^{-1}(\gamma)=\sum_{i} \gamma(a_{i})b_{i}.
\]
\end{lemma}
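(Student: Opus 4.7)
The plan is to use the $\phi$-duality condition to recognise $\{\phi(b_i)\}$ as the basis of $\mathcal{A}^*$ dual to $\{a_i\}$, and then expand $\gamma$ in this dual basis.

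First I would observe that since $(a_i)$ and $(b_i)$ are $\phi$-dual, by definition $\phi(b_i)(a_j) = \delta_{ij}$ for all $i,j$. Since $(a_j)$ is a basis of $\mathcal{A}$ and $(b_i)$ is a basis of $\mathcal{A}$ (hence $(\phi(b_i))$ is a basis of $\mathcal{A}^*$ as $\phi$ is an isomorphism), this identifies $(\phi(b_i))$ as exactly the dual basis of $(a_i)$ in $\mathcal{A}^*$.

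Next I would apply the standard fact that any $\gamma \in \mathcal{A}^*$ expands in this dual basis as
\[
\gamma = \sum_i \gamma(a_i)\, \phi(b_i).
\]
This is checked by evaluating both sides on each $a_j$: the right-hand side gives $\sum_i \gamma(a_i) \delta_{ij} = \gamma(a_j)$, matching the left-hand side, so the equality of linear functionals follows by linearity. Applying $\phi^{-1}$ (which is $\C$-linear) to both sides and pulling the scalars $\gamma(a_i)$ outside yields
\[
\phi^{-1}(\gamma) = \sum_i \gamma(a_i)\, b_i,
\]
as required.

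There is no substantive obstacle: the entire content of the lemma is the observation that the $\phi$-duality condition is precisely the statement that $\phi$ carries $(b_i)$ to the dual basis of $(a_i)$. The only thing one must be careful about is that the definition of $\phi$-dual bases has the roles of $(a_i)$ and $(b_i)$ asymmetric (the $b$'s go into $\phi$, the $a$'s are evaluated), so one must not accidentally swap them in the final formula.
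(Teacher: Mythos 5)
Your proof is correct, and since the paper states this lemma without supplying an argument (treating it as a standard fact recalled from the reference), your proposal fills in the omitted routine verification. The key observation — that the $\phi$-duality condition $\phi(b_i)(a_j)=\delta_{ij}$ identifies $(\phi(b_i))$ as the dual basis of $(a_i)$ in $\mathcal{A}^*$, after which the expansion $\gamma=\sum_i\gamma(a_i)\phi(b_i)$ and linearity of $\phi^{-1}$ give the formula — is exactly the intended content, and your caution about not swapping the asymmetric roles of $(a_i)$ and $(b_i)$ is well placed.
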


\begin{definition}
 A linear functional $\gamma$ on an algebra \A\ is said to be a class functional if $\gamma(ab)=\gamma(ba)\ \forall\ a,\ b \in \mathcal{A}$ and cf(\A) denotes the set of class functionals on \A.
\end{definition}

\begin{lemma}Suppose (\A,\p) is a symmetric Frobenius algebra then 
\[
 \phi^{-1}(\mbox{cf}(\mathcal{A}))=Z(\mathcal{A})
 \ (= \mbox{centre of}\ \mathcal{A}).
 \]
\end{lemma}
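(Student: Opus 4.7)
The plan is to translate the centrality condition on $z = \phi^{-1}(\gamma) \in \mathcal{A}$ across the bimodule isomorphism $\phi$ into a condition on $\gamma \in \mathcal{A}^*$, and observe that this condition unpacks directly to the class functional property.

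First I would use the characterization of a symmetric Frobenius algebra recorded just above the lemma: since $(\mathcal{A},\phi)$ is symmetric, $\phi$ is an isomorphism of $(\mathcal{A},\mathcal{A})$-bimodules. Hence for every $a,z \in \mathcal{A}$ we have
\[
\phi(az) = a \cdot \phi(z), \qquad \phi(za) = \phi(z) \cdot a.
\]
Because $\phi$ is an isomorphism, $z$ is central (i.e. $az = za$ for all $a$) if and only if $a \cdot \phi(z) = \phi(z) \cdot a$ for all $a \in \mathcal{A}$.

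Next I would unpack the bimodule structure on $\mathcal{A}^*$ defined at the start of the section. Writing $\gamma = \phi(z)$ and evaluating at arbitrary $x \in \mathcal{A}$,
\[
(a \cdot \gamma)(x) = \gamma(xa), \qquad (\gamma \cdot a)(x) = \gamma(ax).
\]
Thus $a \cdot \gamma = \gamma \cdot a$ for all $a \in \mathcal{A}$ amounts to $\gamma(xa) = \gamma(ax)$ for all $a,x \in \mathcal{A}$, i.e.\ $\gamma \in \mathrm{cf}(\mathcal{A})$. Combining the two steps gives $z \in Z(\mathcal{A})$ iff $\phi(z) \in \mathrm{cf}(\mathcal{A})$, which is the claimed equality $\phi^{-1}(\mathrm{cf}(\mathcal{A})) = Z(\mathcal{A})$.

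There is no serious obstacle here; the argument is essentially a bookkeeping exercise. The only point that deserves care is invoking the symmetry hypothesis in the right place: one must use that $\phi$ is a \emph{two-sided} bimodule isomorphism (not merely a left module isomorphism as in the general Frobenius case), since both the left and right $\mathcal{A}$-actions on $\mathcal{A}^*$ enter in the transition between centrality in $\mathcal{A}$ and the class functional condition in $\mathcal{A}^*$.
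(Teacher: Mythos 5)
Your proposal is correct and follows essentially the same route as the paper: both rest on the fact that for a symmetric Frobenius algebra $\phi$ is an $(\mathcal{A},\mathcal{A})$-bimodule isomorphism, and both unpack the bimodule structure on $\mathcal{A}^{*}$ to identify the class-functional condition $\gamma(xa)=\gamma(ax)$ with $a\cdot\gamma=\gamma\cdot a$, hence with centrality of $\phi^{-1}(\gamma)$. The only difference is cosmetic: the paper runs the chain of equivalences starting from $\gamma\in \mathrm{cf}(\mathcal{A})$ and transports it through $\phi^{-1}$, while you start from $z=\phi^{-1}(\gamma)$ and transport through $\phi$.
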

\begin{proof}
As (\A,\p) is a symmetric Frobenius algebra, \p \ is a (\A ,\A) bimodule isomorphism between \A \ and \A$^{*}$.
We have
\begin{gather*}
 \gamma \in cf(\mathcal{A}) \\
 \Leftrightarrow \gamma(xy)=\gamma(yx),\ \forall\ x,y\in \mathcal{A}\\
 \Leftrightarrow (\gamma\cdot x)(y)=(x\cdot\gamma)(y),\ \forall \ x,y\in \mathcal{A}\\
 \Leftrightarrow\gamma\cdot x=x\cdot\gamma, \ \forall\  x \in \mathcal{A}\\
 \Leftrightarrow \phi^{-1}(\gamma \cdot x)=\phi^{-1}(x\cdot\gamma),\ \forall\ x\in \mathcal{A}\\
 \Leftrightarrow \phi^{-1}(\gamma)\cdot x=x\cdot\phi^{-1}(\gamma),\ \forall\ x\in \mathcal{A}\\
 \Leftrightarrow \phi^{-1}(\gamma)\in Z(\mathcal{A}).
\end{gather*}
Hence,  $\phi^{-1}\left( cf(\mathcal{A})\right)= Z(\mathcal{A})$.
This proves the lemma.
\end{proof}

\subsection{\textbf{Orthogonality relations for a Frobenius algebra}}

Let (\A,\p) be a Frobenius algebra, $M$ a simple left \A -module, and let
 \A$_{M}=\{a\in\mathcal{A}:am=0,\ \forall \ m \in M \}$.
 Then \A$_{M}$ is two sided ideal of \A, the $\mathbf{annihilator}$ of $M$ and
 \A$_{M}$ is a maximal two-sided ideal of \A.\
If $L$ is a left \A-module, Soc$_{M}(L)$ will denote the $M$-$socle$ of $L$, $i.e.$, the sum of all submodules of $L$ isomorphic to $M$.
We say that $L$ is $M$-$\textbf{homogeneous}$ if Soc$_{M}(L)=L$.
Also note that  $L$ is $M$-homogeneous if and only if $\mathcal{A}_{M}L=0$.

Let $\gamma\in \mathcal{A}^{*}$ such that 
$\gamma$(\A$_{M})$=0 then we say that 
 $\gamma \ \mathbf{belongs} \ to \  M$.
If $\gamma$\ belongs to $M$ then $a.\gamma$ and $\gamma.a$ also belong to $M$, $\forall\ a\in$ \A.

We say that $\chi\in \mathcal{A}^{*}$ \ is an \A -$\textbf{character}$ if $\chi$ \ is a character of a finite dimensional left \A -module.

Now onwards in this section \A \ denotes a Frobenius algebra (\A,\p).

\begin{lemma}
Let $M$ be a simple \A -module, and assume $\gamma$  $\emph{belongs}
$ to $M$. Set $\gamma^{*}=\phi^{-1}(\gamma)$. Then 
$\mathcal{A}_{M}\gamma^{*}=0$. In particular,
 if $N$ is any left \A -module,
 then \A $\gamma^{*}N$ is $M$-homogeneous.
\end{lemma}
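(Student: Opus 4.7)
The plan is to prove $a\gamma^{\ast}=0$ for every $a\in\mathcal{A}_M$ by transporting the question across the left $\mathcal{A}$-module isomorphism $\phi:\mathcal{A}\to\mathcal{A}^{\ast}$. Since $\phi$ is left $\mathcal{A}$-linear and $\phi(\gamma^{\ast})=\gamma$, we have
\[
\phi(a\gamma^{\ast}) \;=\; a\cdot\phi(\gamma^{\ast}) \;=\; a\cdot\gamma,
\]
so it suffices to show that $a\cdot\gamma$ vanishes as an element of $\mathcal{A}^{\ast}$.

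I would then unwind the left bimodule action on $\mathcal{A}^{\ast}$ recalled just after the first definition: for any $x\in\mathcal{A}$,
\[
(a\cdot\gamma)(x) \;=\; \gamma(xa).
\]
The key point is that $\mathcal{A}_M$ is a \emph{two-sided} ideal, so $a\in\mathcal{A}_M$ implies $xa\in\mathcal{A}_M$, and then $\gamma(xa)=0$ because $\gamma$ belongs to $M$. Hence $a\cdot\gamma=0$ as an element of $\mathcal{A}^{\ast}$, and injectivity of $\phi$ yields $a\gamma^{\ast}=0$, as required.

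For the ``in particular'' statement, I would invoke the characterization stated just before the lemma: a left $\mathcal{A}$-module $L$ is $M$-homogeneous if and only if $\mathcal{A}_M L=0$. Applied to $L=\mathcal{A}\gamma^{\ast}N$, the two-sided ideal property $\mathcal{A}_M\mathcal{A}\subseteq\mathcal{A}_M$ combined with the first part gives
\[
\mathcal{A}_M(\mathcal{A}\gamma^{\ast}N) \;\subseteq\; \mathcal{A}_M\gamma^{\ast}N \;=\; 0,
\]
which finishes the argument.

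I do not foresee any serious obstacle; the whole proof is essentially bookkeeping through $\phi$. The only thing one has to be careful about is matching conventions: the left $\mathcal{A}$-action on $\mathcal{A}^{\ast}$ is defined by $(a\cdot\gamma)(x)=\gamma(xa)$, so it is crucial that $xa$ (rather than $ax$) appears under $\gamma$, which is precisely why the two-sidedness of the annihilator $\mathcal{A}_M$ is needed and not merely that it is a left ideal.
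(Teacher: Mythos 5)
Your proof is correct and follows essentially the same route as the paper's: show $a\cdot\gamma=0$ from $(a\cdot\gamma)(x)=\gamma(xa)\in\gamma(\mathcal{A}_{M})=0$, transport through the left $\mathcal{A}$-linear isomorphism $\phi$ to get $a\gamma^{*}=0$, and then use $\mathcal{A}_{M}\mathcal{A}\subseteq\mathcal{A}_{M}$ together with the criterion that $L$ is $M$-homogeneous iff $\mathcal{A}_{M}L=0$. One tiny quibble in your closing remark: the inclusion $xa\in\mathcal{A}_{M}$ is exactly the \emph{left}-ideal property $\mathcal{A}\mathcal{A}_{M}\subseteq\mathcal{A}_{M}$, while the right-ideal property is what the ``in particular'' step uses, so your comment swaps the two roles --- harmless here, since $\mathcal{A}_{M}$ is indeed two-sided.
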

\begin{proof}
 If $b \in\mathcal{A}_{M}$  then $b\gamma = 0$ ( $\because$ $(b\gamma)(\mathcal{A})=\gamma(\mathcal{A} b)\subseteq\gamma(\mathcal{A}_{M})=0$).
Therefore $b\gamma^{*} = \phi^{-1}(b\gamma)=0,\, b\in\mathcal{A}_{M}$, and so $\mathcal{A}_{M} \gamma^{*}$ = 0. 
For $N$  a left $\mathcal{A}$ -module,
$\mathcal{A}_{M}(\mathcal{A}\gamma^{*}N)=\mathcal{A}_{M}\gamma^{*}N=0$, so $\mathcal{A}\gamma^{*}N$ is $M$-homogeneous.
\end{proof}
 
\begin{theorem}{(Orthogonality relations).}
Let $M$ and $N$ be simple left \A -modules, and assume $\gamma$, $\mu$  belong to $M, N$  respectively, where  $\gamma$, $\mu \in\mathcal{A}^{*}$. 
Set $\gamma^{*}=\phi ^{-1}(\gamma)$.
If $\gamma^{*}N\neq 0$, \ then $M\cong N$.
In particular, if $\mu(\gamma^{*})\neq0$, \ then $M\cong N$.
 \end{theorem}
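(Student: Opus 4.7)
The plan is to derive the theorem as a short deduction from the preceding lemma. That lemma already guarantees that $\mathcal{A}\gamma^{*}N$ is $M$-homogeneous for any left $\mathcal{A}$-module $N$; the only remaining content is to combine this with the simplicity of $N$, and, for the ``in particular'' clause, to translate the numerical hypothesis $\mu(\gamma^{*})\neq 0$ into the module-theoretic statement $\gamma^{*}N\neq 0$.

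For the first assertion, I would argue as follows. Since $\gamma^{*}N\neq 0$, the submodule $\mathcal{A}\gamma^{*}N\subseteq N$ is nonzero; by simplicity of $N$ it must equal $N$. The previous lemma says $\mathcal{A}\gamma^{*}N$ is $M$-homogeneous, so $N$ itself is $M$-homogeneous. Unpacking the definition, $N$ equals the sum of its submodules isomorphic to $M$; being simple and nonzero, $N$ therefore coincides with one such submodule, and hence $N\cong M$.

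For the second assertion I would argue by contraposition. Suppose $\gamma^{*}N=0$; then by the very definition of the annihilator, $\gamma^{*}\in\mathcal{A}_{N}$. The hypothesis that $\mu$ belongs to $N$ means precisely $\mu(\mathcal{A}_{N})=0$, so $\mu(\gamma^{*})=0$. Hence $\mu(\gamma^{*})\neq 0$ forces $\gamma^{*}N\neq 0$, and the first part then applies to give $M\cong N$.

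The only step that requires any care is the identification of $M$-homogeneity with annihilation by $\mathcal{A}_{M}$, which is recorded in the paragraph preceding the earlier lemma and is what allows one to invoke that lemma with $L=\mathcal{A}\gamma^{*}N$. Beyond that, the argument is essentially a repackaging of the previous lemma together with the elementary fact that a simple module which contains a copy of $M$ must itself be isomorphic to $M$, so I do not anticipate any real obstacle.
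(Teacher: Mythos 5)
Your proposal is correct and follows the paper's own argument: use simplicity to get $\mathcal{A}\gamma^{*}N=N$, invoke the preceding lemma to conclude $N$ is $M$-homogeneous and hence $N\cong M$, and reduce the ``in particular'' clause to the first part via $\mu(\mathcal{A}_{N})=0$ (your contrapositive phrasing is just the paper's step stated in reverse). No gaps.
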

\begin{proof}
  
Suppose $\gamma^{*}N\neq0$. 
Since $N$\ is simple, 
$\mathcal{A}\gamma^{*}N =N$. By (1.5), $N$ must be $M$-homogeneous, $i.e.,M\cong N$.

In particular if $\mu(\gamma^{*})\neq0$, then $\gamma^{*}\notin
\mathcal{A}_{N}.\ i.e.,\, \gamma^{*}N\neq0$, so by above  $M\cong N$.
\end{proof}

\begin{corollary}\ Let $M,N,\gamma, \mu$ be as in (1.6), and set 
$$\gamma^{*}=\phi^{-1}(\gamma),\ \mu^{*}=\phi^{-1}(\mu).$$
If $\gamma^{*}\mu^{*}\neq0$,\ then $M\cong N.$
\end{corollary}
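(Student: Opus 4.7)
The plan is to argue the contrapositive: assume $M\not\cong N$ and deduce $\gamma^{*}\mu^{*}=0$. This should follow by stringing together Lemma 1.5 (which produces two-sided annihilator relations) and Theorem 1.6 (which converts a non-vanishing condition into an isomorphism of simples) in the right order.

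First I would observe, via Theorem 1.6 applied to the pair $(\gamma, M)$ and the simple module $N$, that if $M\not\cong N$ then necessarily $\gamma^{*}N=0$. Unpacking this, it says precisely that $\gamma^{*}$ lies in the maximal two-sided ideal $\mathcal{A}_{N}$, the annihilator of $N$.

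Next I would invoke Lemma 1.5, but with the roles of $\gamma$ and $M$ played by $\mu$ and $N$: since $\mu$ belongs to $N$, the lemma gives $\mathcal{A}_{N}\,\mu^{*}=0$. Combining the two steps, $\gamma^{*}\mu^{*}\in \mathcal{A}_{N}\mu^{*}=0$, which is the desired contradiction to $\gamma^{*}\mu^{*}\neq 0$.

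There is no real obstacle here; everything is a formal consequence of what has already been established. The only thing to be careful about is the left/right placement in the product $\gamma^{*}\mu^{*}$: one must apply Theorem 1.6 to push $\gamma^{*}$ into $\mathcal{A}_{N}$ (not $\mu^{*}$ into $\mathcal{A}_{M}$), so that Lemma 1.5 then annihilates $\mu^{*}$ on the left by elements of $\mathcal{A}_{N}$ in the correct order. Symmetric reasoning would of course also yield $\mu^{*}\gamma^{*}=0$ when $M\not\cong N$, but for the stated corollary only the one order is needed.
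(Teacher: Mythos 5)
Your argument is correct, but it takes a genuinely different route from the paper. You argue contrapositively inside $\mathcal{A}$: the first clause of Theorem 1.6, contraposed, places $\gamma^{*}$ in the two-sided annihilator $\mathcal{A}_{N}$ when $M\ncong N$, and Lemma 1.5 applied to the pair $(\mu,N)$ gives $\mathcal{A}_{N}\mu^{*}=0$, whence $\gamma^{*}\mu^{*}=0$; your care about the order of the factors is exactly right, and since $\mathcal{A}_{N}$ is a two-sided ideal your argument in fact yields the slightly stronger statement $\gamma^{*}\mathcal{A}\mu^{*}=0$ whenever $M\ncong N$. The paper instead works in $\mathcal{A}^{*}$: from $\gamma^{*}\mu^{*}\neq 0$ it deduces $\gamma^{*}\mu\neq 0$ (implicitly using that $\phi$ is a left $\mathcal{A}$-module isomorphism), evaluates at some $a$ to get $0\neq(\gamma^{*}\mu)(a)=\mu(a\gamma^{*})=(\mu\cdot a)(\gamma^{*})$, notes that $\mu\cdot a$ still belongs to $N$, and then applies the ``in particular'' clause of Theorem 1.6 (the criterion $\mu(\gamma^{*})\neq 0\Rightarrow M\cong N$) with $\mu\cdot a$ in place of $\mu$. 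Both proofs are short and rest on the same two prior results; yours is a purely formal combination of the stated annihilator facts and needs no further appeal to the bimodule structure on $\mathcal{A}^{*}$, while the paper's computation shows explicitly how the pairing converts non-vanishing of the product $\gamma^{*}\mu^{*}$ into the numerical criterion already isolated in Theorem 1.6.
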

\begin{proof} Assume $\gamma^{*}\mu^{*}\neq0$.
Then $\gamma^{*}\mu\neq0$, and therefore for some $a\in$\A, 

0 $\neq(\gamma^{*}\mu)(a)=\mu(a\gamma^{*})=(\mu a)(\gamma^{*})$.
But $\mu a$\ belongs to $N$, so by (1.6), 
$M\cong N.$
\end{proof}

\begin{corollary}{(Orthogonality relations for characters)}\ Let, M, N be simple left \A-modules with characters $\chi$  and $\zeta$ respectively.
Set $\chi ^{*}$=\p$^{-1}$($\chi$).
If $\zeta$($\chi ^{*}$)$\neq$0, then M $\cong$ N and $\chi = \zeta$. 
In particular if $(a_{i})$, $(b_{i})$ are \p -dual bases for \A, and if $\sum_{i} \chi(a_{i})\zeta(b_{i})\neq$ 0, then M $\cong$ N and $\chi = \zeta$.
\end{corollary}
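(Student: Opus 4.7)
The plan is to reduce this corollary directly to Theorem 1.6 by verifying its hypotheses, namely that characters $\chi$ and $\zeta$ belong to $M$ and $N$ respectively in the sense defined before Lemma 1.5.

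First I would check that any character $\chi$ of a simple $\mathcal{A}$-module $M$ belongs to $M$, i.e. $\chi(\mathcal{A}_M)=0$. This is immediate: if $b\in \mathcal{A}_M$ then the operator by which $b$ acts on $M$ is zero, so its trace $\chi(b)$ vanishes. The same reasoning gives $\zeta(\mathcal{A}_N)=0$. Thus $\chi$ and $\zeta$ satisfy the hypothesis on $\gamma,\mu$ in Theorem 1.6 (with $M,N$ the simple modules at hand).

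Next, assuming $\zeta(\chi^*)\neq 0$, Theorem 1.6 applied to $\gamma=\chi$, $\mu=\zeta$ gives $M\cong N$. Since isomorphic modules have the same character, we conclude $\chi=\zeta$. This handles the first assertion.

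For the ``in particular'' statement, I would use Lemma 1.3 to express $\chi^{*}=\phi^{-1}(\chi)$ in terms of the given $\phi$-dual bases $(a_i),(b_i)$:
\[
\chi^{*}=\phi^{-1}(\chi)=\sum_i \chi(a_i)\,b_i.
\]
Applying $\zeta$ and using linearity yields
\[
\zeta(\chi^{*})=\sum_i \chi(a_i)\,\zeta(b_i),
\]
so the hypothesis $\sum_i \chi(a_i)\zeta(b_i)\neq 0$ is precisely the hypothesis $\zeta(\chi^{*})\neq 0$ of the first part, and the conclusion follows. There is no real obstacle here; the only subtle point is verifying at the outset that a character belongs to its module, which is the conceptual content needed to import Theorem 1.6 into the character setting.
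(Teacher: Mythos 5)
Your proposal is correct and follows essentially the same route as the paper: the paper's one-line proof likewise reduces the statement to the earlier orthogonality result together with the dual-bases formula $\phi^{-1}(\gamma)=\sum_i\gamma(a_i)b_i$, and you simply supply the (easy) verification that a character belongs to its module. The only cosmetic slip is the reference: the dual-bases expression is Lemma 1.2 in the paper, not Lemma 1.3.
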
 
\begin{proof} Corollary follows from lemma 1.2 and above corollary.
\end{proof}

\section{\textbf{ Frobenius $\star$-algebra}}
Now we will define a Frobenius $\star$-algebra and prove some results about it.
\begin{definition}
A Frobenius $\star$-algebra \A\ is a finite dimensional, associative, unital $\C$-algebra equipped with a  class functional $\lambda:\mathcal{A\rightarrow\C}$ and a map $\star:\mathcal{A}\rightarrow\mathcal{A}$, such that the following  holds:

$\bullet\ (a+b)^{\star} = a^{\star}+b^{\star}\ , \forall\ a, b \ \in \mathcal{A}.$

$\bullet\ (a^{\star})^{\star} = a,\ \forall\ a\ \in \mathcal{A}.$

$
\bullet\ (ab)^{\star}=b^{\star}a^{\star} \ , \forall \ a,b\in\mathcal{A}.$ 

$\bullet\ (\alpha a)^{\star}=\bar{\alpha} a^{\star} \,
     ,\forall \ a \in\mathcal{A} 
     ,\forall\ \alpha\in\mathbb{C}.
$

$\bullet$ $\langle a,b \rangle=\lambda(ab^{*})$ is a positive definite hermitian form on \A.

\end{definition}
\begin{remark}
As $\langle\cdot,\cdot\rangle$ is nondegenerate, so \la\ is a nondegenerate class functional.
Therefore \A\ is a symmetric Frobenius algebra.
\end{remark}
\begin{example}
$\mathcal{A} = \C[G]$ with \la\ is defined as
\la(g)=
$\begin{cases}
1\ \mbox{if}\ g=e ;\\
0\ \mbox{otherwise}.
\end{cases}$

$\star$-map is defined by
$g^{\star}=g^{-1},\ \forall\ g \in G$ and extended to \A \ by conjugate linearity.
\end{example}

Now onwards in this section \A\ denotes a Frobenius $\star$-algebra (\A,\la,$\star$).

\newpage
\begin{theorem}
 \A \ is a semi-simple algebra.
\end{theorem}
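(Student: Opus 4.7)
The plan is to prove semisimplicity by exhibiting, for every left ideal $I\subseteq\mathcal{A}$, a complementary left ideal, using the positive definite Hermitian form $\langle a,b\rangle=\lambda(ab^{\star})$ as the key tool. Since the form is positive definite, it is in particular nondegenerate, so for any subspace $I$ one has the vector space decomposition $\mathcal{A}=I\oplus I^{\perp}$, where $I^{\perp}=\{y\in\mathcal{A}:\langle x,y\rangle=0\ \forall x\in I\}$. The whole proof reduces to checking that $I^{\perp}$ is itself a left ideal whenever $I$ is.

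The key computation is the following: for $x\in I$, $y\in I^{\perp}$, and $a\in\mathcal{A}$, I would compute
\[
\langle x,ay\rangle=\lambda\bigl(x(ay)^{\star}\bigr)=\lambda(xy^{\star}a^{\star})=\lambda(a^{\star}xy^{\star})=\langle a^{\star}x,y\rangle,
\]
where the third equality uses that $\lambda$ is a class functional and the second uses the anti-involutive axioms $(ay)^{\star}=y^{\star}a^{\star}$. Since $I$ is a left ideal, $a^{\star}x\in I$, and since $y\in I^{\perp}$, the right-hand side vanishes. Hence $ay\in I^{\perp}$, so $I^{\perp}$ is a left ideal as claimed.

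Once this is established, the rest is routine: given any left $\mathcal{A}$-submodule $I$ of $\mathcal{A}$, the decomposition $\mathcal{A}=I\oplus I^{\perp}$ realizes $I$ as a direct summand of the left regular module, so $\mathcal{A}$ is completely reducible as a left $\mathcal{A}$-module. Applying this inductively (on dimension) to the left regular representation yields a decomposition of $\mathcal{A}$ into a direct sum of simple left modules, which is the definition of semisimplicity.

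The only genuine obstacle is the interplay between the class functional property of $\lambda$ and the anti-involution property of $\star$ in the computation above; both axioms are indispensable, and the conjugate-linearity of $\star$ combined with positive definiteness is what makes the orthogonal complement well behaved (e.g., rules out isotropic ideals, which would otherwise obstruct the decomposition in characteristic-$p$ analogues). I do not expect any further difficulty; the argument is the standard ``averaging by a nondegenerate invariant form'' proof of Maschke-type theorems, adapted to this setting.
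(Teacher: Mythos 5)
Your proof is correct and is essentially the paper's argument: take the orthogonal complement of an ideal with respect to the positive definite form $\langle a,b\rangle=\lambda(ab^{\star})$ and verify that it is again an ideal, which gives complete reducibility of the regular module. The only minor difference is that you work with left ideals, which forces you to invoke the class-functional property of $\lambda$ in the computation $\langle x,ay\rangle=\langle a^{\star}x,y\rangle$, whereas the paper works with right ideals, where $\langle ra,y\rangle=\lambda(ray^{\star})=\langle r,ya^{\star}\rangle$ follows from the $\star$-axioms alone.
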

\begin{proof}
  By Artin-Wedderburn theorem it is enough to prove that \A \ is a semi-simple as a right \A -module.
To prove the theorem it is enough to show that for every right  ideal $\mathcal{I}$ of \A \ there exists another right ideal $\mathcal{J}$ of \A \ such that $\mathcal{A} = \mathcal{I}\oplus\mathcal{J}$, as right \A -modules.

Now for a fix right ideal $\mathcal{I}$ of \A.\ Define $\mathcal{J}$ as,
 $$\mathcal{J}=\{a\in\mathcal{A}:\langle a,y\rangle=0, \forall\ y\in\mathcal{I}\}$$
 $i.e.\ \mathcal{J}$ is the orthogonal compliment of $\mathcal{I}$ in \A. 
 As $\langle\, , \rangle $ is a positive definite hermitian form on
 \A, $\mathcal{A}=\mathcal{I}\oplus \mathcal{J}$ as a vector space.
To prove the theorem it is enough to show that $\mathcal{J}$ is a right ideal of \A. Let $r\in\mathcal{J}$ be any element and $a\in$ \A \ be any element and y $\in \mathcal{I}$. Consider 
$\langle ra,y\rangle=\lambda(ray^{*})=\langle r,ya^{*}\rangle=0$ 
( since $\mathcal{I}$ is right ideal of A)
$\Rightarrow \langle ra,y\rangle = 0,\
\forall\ y\in \mathcal{I}$
$\Rightarrow ra \in \mathcal{J},\  \forall\ a\in \mathcal{A},\ \forall\ r\in \mathcal{J}$.
This proves that $\mathcal{J}$ is a right ideal of \A, and the theorem follows.
\end{proof}

\noindent
$\bullet$Let Sim$(\mathcal{A})=$ set of all simple left \A-modules upto isomorphism.

By Theorem 2.4, \A \ is a semi-simple algebra. Therefore \A\ = $\oplus_{M \in Sim(\mathcal{A})} E(M)$, 
 $E(M) = M$-primary component of \A\ = Sum of all left ideals of \A \ isomorphic to $M$ as \A -module.

As \A\ is a semi-simple algebra, there exist primitive central orthogonal idempotents $e_{M}\in E(M)$, for every $M \in $ Sim(\A), such that
$\{e_{M}: M \in \mbox{Sim}(\mathcal{A})\}$ is a basis of $Z$(\A) and
$E(M)$ = \A$e_{M}$.

Let $M, N$ be simple \A -modules, $\chi_{M}, \chi_{N}$ 
characters of $M, N$  respectively.
 Set $\alpha _{M}=\phi^{-1}(\chi_{M}) \ \mbox{and} \ \alpha_{N}=\phi^{-1}(\chi_{N})$. As $\chi_{M}, \chi_{N} \in $ \ $cf$(\A), by  lemma (1.4) 
 $ \alpha_{M}, \alpha_{N} \in$ Z(\A). 
 Therefore, $\alpha_{M}=\sum_{L\in Sim(A)} c_{L}e_{L}$, and $\alpha_{N}=\sum_{L\in Sim(A)} d_{L}e_{L}$    where $c_{L},\ d_{L}\in \C$.

\begin{lemma} 
\ $e_{M}^{\star}=e_{M}$,\ $\forall \ M \in Sim(\mathcal{A})$.
\end{lemma}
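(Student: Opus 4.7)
The plan is to argue that the involution $\star$ permutes the set of primitive central idempotents $\{e_M\}_{M\in\mathrm{Sim}(\mathcal{A})}$, and then use the positive-definiteness of $\langle\cdot,\cdot\rangle$ to force this permutation to be the identity.

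First I would check that for every $M$, the element $e_M^\star$ is again a primitive central idempotent of $\mathcal{A}$. Idempotence follows from antimultiplicativity: $(e_M^\star)^2 = (e_M e_M)^\star = e_M^\star$. Centrality follows because for any $a\in\mathcal{A}$ we have $a\, e_M^\star = (e_M a^\star)^\star = (a^\star e_M)^\star = e_M^\star\, a$. To see that $e_M^\star$ is primitive among central idempotents, note that any central orthogonal decomposition $e_M^\star = f+g$ transports under $\star$ to a central orthogonal decomposition $e_M = f^\star + g^\star$, and primitivity of $e_M$ forces one of the summands to vanish. Since $1^\star = 1 = \sum_M e_M$, applying $\star$ gives $\sum_M e_M^\star = 1$, and by uniqueness of the decomposition of $1$ into primitive central orthogonal idempotents, $\star$ induces a permutation on $\{e_M : M\in\mathrm{Sim}(\mathcal{A})\}$.

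Now I would pin down that this permutation is trivial. Suppose toward contradiction that $e_M^\star = e_{M'}$ for some $M' \neq M$. Since distinct primitive central idempotents are orthogonal, $e_M\, e_M^\star = e_M\, e_{M'} = 0$. Therefore
\[
\langle e_M, e_M\rangle \;=\; \lambda(e_M\, e_M^\star) \;=\; \lambda(0) \;=\; 0.
\]
But $e_M \neq 0$ and $\langle\cdot,\cdot\rangle$ is positive definite, so $\langle e_M, e_M\rangle > 0$, a contradiction. Hence $e_M^\star = e_M$.

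The only genuinely substantive point is the first one, and the only subtlety there is the primitivity of $e_M^\star$; the punchline then comes cheaply from positive definiteness. No heavy calculation is required, and no machinery beyond the definition of a Frobenius $\star$-algebra and the Wedderburn decomposition already recorded before the statement.
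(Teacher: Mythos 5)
Your proof is correct and follows essentially the same route as the paper: show that $\star$ sends the primitive central idempotents to central orthogonal idempotents summing to $1$, so that $e_M^\star=e_N$ for some $N$, and then use positive definiteness via $\langle e_M,e_M\rangle=\lambda(e_M e_M^\star)\neq 0$ to force $e_M^\star=e_M$. The only difference is cosmetic: you spell out the primitivity of $e_M^\star$ explicitly, a detail the paper leaves implicit.
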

\begin{proof} $e_{M}^{\star}e_{M}^{\star}=(e_{M}e_{M})^{\star}=e_{M}^{\star}$, and
for any $a\in $\A, $a.e_{M}^{\star}=(e_{M}a^{\star})^{\star}=(a^{\star}e_{M})^{\star}=e_{M}^{\star}.a$  and $e_{M}^{\star}e_{N}^{\star}=(e_{N}e_{M})^{\star}=0$\ if $M\neq N$.
 Therefore, $\{e_{M}^{\star}:M \in Sim(\mathcal{A})\}$ is a set of central orthogonal idempotents.
 Also, $1=\sum_{M\in Sim(\mathcal{A})}  e_{M}$
and $1^{\star}=1$. Therefore 
$$1=\sum_{M\in Sim(\mathcal{A})}  e_{M}=\sum_{M\in Sim(\mathcal{A})}  e_{M}^{\star}$$
$\Rightarrow$ $e_{M}^{\star}=e_{N}$ for some $N\in$ Sim(\A). But we know that,

$\langle e_{M},e_{M}\rangle\neq0$  $\Rightarrow$ \la$(e_{M}e_{M}^{\star})\neq0$ $\Rightarrow  e_{M}e_{M}^{\star}\neq0$ 
$\Rightarrow \ e_{M}=e_{M}^{\star}.$
\end{proof}

\begin{lemma} $\alpha_{M}=c_{M}e_{M}$.
\end{lemma}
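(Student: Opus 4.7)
The plan is to show that the only coefficient $c_L$ that can be nonzero in the expansion $\alpha_M = \sum_{L \in \text{Sim}(\mathcal{A})} c_L e_L$ is $c_M$. This will be accomplished by multiplying $\alpha_M$ by $e_L$ for $L \neq M$ and showing the product is zero, using Lemma 1.5.

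First I would recall that $\chi_M$ belongs to $M$: indeed $\chi_M(\mathcal{A}_M) = 0$ because $\mathcal{A}_M$ acts by zero on $M$. Therefore Lemma 1.5 applies and yields $\mathcal{A}_M \alpha_M = 0$. Next I would observe that for every $L \in \text{Sim}(\mathcal{A})$ with $L \not\cong M$, the idempotent $e_L$ lies in $\mathcal{A}_M$. This follows from the Wedderburn decomposition $\mathcal{A} = \bigoplus_{K} E(K)$: the action of $\mathcal{A}$ on $M$ factors through the projection to $E(M)$, so every element of $E(L) = \mathcal{A} e_L$ for $L \neq M$ annihilates $M$; in particular $e_L \in \mathcal{A}_M$.

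Combining these two facts, for $L \neq M$ we have $e_L \alpha_M = 0$. On the other hand, using the orthogonality $e_L e_K = \delta_{L,K} e_L$ of the central idempotents,
\[
e_L \alpha_M = \sum_{K} c_K e_L e_K = c_L e_L.
\]
Since $e_L \neq 0$, we conclude $c_L = 0$ for every $L \neq M$. Hence $\alpha_M = c_M e_M$, as claimed.

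I do not anticipate any real obstacle here; the only subtle point is noting that $e_L \in \mathcal{A}_M$ whenever $L \neq M$, which is a direct consequence of the block decomposition of the semisimple algebra $\mathcal{A}$ established via Theorem 2.4 together with the definition of the primary components $E(L)$.
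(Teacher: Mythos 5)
Your proof is correct. It differs from the paper's in which fact it leans on: the paper proves the ``positive'' identity $e_{M}\chi_{M}=\chi_{M}$ by direct evaluation, writing $a=\sum_{L}a_{L}$ and using $\chi_{M}(ae_{M})=\chi_{M}(a_{M})=\chi_{M}(a)$ (since $a_{N}\in\mathcal{A}_{M}$ for $N\neq M$), then transports this through the bimodule isomorphism $\phi$ to get $e_{M}\alpha_{M}=\alpha_{M}$ and finishes with $e_{M}\sum_{L}c_{L}e_{L}=c_{M}e_{M}$. You instead prove the complementary ``negative'' statement: since $\chi_{M}$ belongs to $M$, Lemma 1.5 gives $\mathcal{A}_{M}\alpha_{M}=0$, and since $e_{L}\in E(L)\subseteq\mathcal{A}_{M}$ for $L\neq M$ (every block other than $E(M)$ annihilates $M$), you get $c_{L}e_{L}=e_{L}\alpha_{M}=0$, killing the off coefficients one by one. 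Both routes rest on the same block decomposition and the orthogonality of the central idempotents; yours has the small advantage of recycling Lemma 1.5 and working entirely on the $\alpha_{M}$ side of $\phi$, while the paper's works on the functional side and avoids having to observe explicitly that $e_{L}$ annihilates $M$. Your identification of $e_{L}\in\mathcal{A}_{M}$ is justified exactly as you say, by the Wedderburn decomposition from Theorem 2.4, so there is no gap.
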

\begin{proof} 
Let $a \in \mathcal{A}$ be any element then we can write $a = \sum_{L\in \mbox{Sim}(A)}a_{L}$.

Consider \p($e_{M}\alpha_{M}) = e_{M}\phi(\alpha_{M}) = e_{M}\chi_{M}$ then
$e_{M}\chi_{M}(a) = \chi_{M}(a e_{M}) = \chi_{M}(a_{M}) = \chi_{M}(a)$ (since $a_{N}\in \mathcal{A}_{M}$ if $M\neq N$).
Hence $e_{M}\chi_{M} = \chi_{M}$ $\Rightarrow$ $\phi^{-1}(e_{M}\chi_{M}) = \phi^{-1}(\chi_{M})$
$\Rightarrow$ $e_{M}\alpha_{M} = \alpha_{M}$.

Now $\alpha_{M} = e_{M}\alpha_{M}=\sum_{L\in \mbox{Sim}(\mathcal{A})}e_{M} c_{L}e_{L} = c_{M}e_{M}$.
Therefore $\alpha_{M} = c_{M}e_{M}$.
\end{proof}

\begin{lemma}
$\alpha_{M}.\alpha_{N}^{\star}=0$\ if $M\ncong N$.
\end{lemma}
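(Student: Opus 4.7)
The plan is to combine the previous two lemmas directly. By Lemma 2.6 we have $\alpha_M = c_M e_M$ and $\alpha_N = c_N e_N$, so after expanding $\alpha_N^\star$ using the conjugate-linearity of $\star$ we get
\[
\alpha_M \alpha_N^\star = (c_M e_M)(c_N e_N)^\star = c_M \overline{c_N}\, e_M e_N^\star.
\]
Now I would invoke Lemma 2.5, which states $e_N^\star = e_N$, to rewrite this as $c_M \overline{c_N}\, e_M e_N$.

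Finally, since $M \not\cong N$ and the $\{e_L : L \in \mathrm{Sim}(\mathcal{A})\}$ are mutually orthogonal central idempotents (this was established just before Lemma 2.5 from the Wedderburn decomposition $\mathcal{A} = \bigoplus_{L} E(L)$ together with $E(L) = \mathcal{A} e_L$), we have $e_M e_N = 0$, so the product vanishes.

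There is essentially no obstacle here: the statement is a one-line consequence of the identification $\alpha_M = c_M e_M$ from Lemma 2.6 and the self-adjointness $e_M^\star = e_M$ from Lemma 2.5. The only minor point to get right is the conjugate on the scalar $c_N$ when moving $\star$ past it, but this does not affect the conclusion since $0$ times anything is still $0$.
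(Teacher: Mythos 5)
Your proof is correct and is essentially the same as the paper's: both expand $\alpha_M\alpha_N^\star = c_M\overline{c_N}\,e_M e_N^\star$ via Lemma 2.6, use $e_N^\star=e_N$ from Lemma 2.5, and conclude by orthogonality of the central idempotents. Your write-up just makes the use of Lemma 2.5 and the conjugate-linearity of $\star$ explicit, which the paper leaves implicit.
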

\begin{proof}
$\alpha_{M}.\alpha_{N}^{\star}=c_{M}e_{M}\overline{c_{N}}e_{N}=c_{M}\overline{c_{N}}e_{M}e_{N}= 0$.
\end{proof}

\begin{theorem}{(Orthogonality Relations)}$\label{orthoforstar}$
Suppose $M$, $N$\ are simple \A -modules, 
$\chi_{M}$ and $\chi_{N}$\ denotes the characters of $M, N$ respectively.
Set $\phi^{-1}(\chi_{M})=\alpha_{M}\ and\ \phi^{-1}(\chi_{N})=\alpha_{N}$\ then 
\begin{gather*}
\langle \alpha_{M},\alpha_{N}\rangle=0,\, \mbox{if}\ M\ncong N \\
\langle\alpha_{M},\alpha_{M}\rangle=\abs{c_{M}}^{2} \langle e_{M},e_{M}\rangle.
\end{gather*}
Moreover, the irreducible characters of \A\ forms an orthogonal basis of the space of class functionals on \A.
\end{theorem}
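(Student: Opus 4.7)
The plan is to derive the three claims in sequence: the two inner-product identities follow essentially by unfolding the preparatory lemmas, and the basis statement requires just a small additional nonvanishing argument.

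For the off-diagonal identity, the approach is to unfold $\langle \alpha_M, \alpha_N \rangle = \lambda(\alpha_M \alpha_N^{\star})$ and invoke Lemma 2.7, which says $\alpha_M \alpha_N^{\star} = 0$ whenever $M \not\cong N$. For the norm identity $\langle \alpha_M, \alpha_M \rangle = |c_M|^2 \langle e_M, e_M \rangle$, I would combine Lemma 2.6 (which gives $\alpha_M = c_M e_M$) with Lemma 2.5 (giving $e_M^{\star} = e_M$, and hence $\alpha_M^{\star} = \overline{c_M}\, e_M$) to compute $\alpha_M \alpha_M^{\star} = |c_M|^2 e_M^2$, and then use idempotence of $e_M$ together with $e_M^{\star} = e_M$ to rewrite $\lambda(e_M^2) = \lambda(e_M e_M^{\star}) = \langle e_M, e_M \rangle$.

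For the ``moreover'' clause, I would proceed as follows. By Lemma 1.4, $\phi$ restricts to a linear bijection $Z(\mathcal{A}) \to \mbox{cf}(\mathcal{A})$, and by the Artin--Wedderburn decomposition the central idempotents $\{e_M : M \in \mbox{Sim}(\mathcal{A})\}$ form a basis of $Z(\mathcal{A})$. By Lemma 2.6 we have $\alpha_M = c_M e_M$, so $\{\alpha_M\}$ is a basis of $Z(\mathcal{A})$ provided each $c_M$ is nonzero. To see this nonvanishing, note that if $c_M = 0$ then $\alpha_M = 0$ and hence $\chi_M = \phi(\alpha_M) = 0$, contradicting $\chi_M(1) = \dim_{\mathbb{C}} M > 0$. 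Applying $\phi$ then yields that $\{\chi_M\}$ is a basis of $\mbox{cf}(\mathcal{A})$, and orthogonality with respect to the hermitian form transported from $\mathcal{A}$ via $\phi^{-1}$ is exactly the first identity.

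The main obstacle, modest as it is, is the nonvanishing $c_M \neq 0$, since without it the $\alpha_M$ could fail to span $Z(\mathcal{A})$. Once this is secured the entire statement reduces to a bookkeeping transfer of structure between $Z(\mathcal{A})$ and $\mbox{cf}(\mathcal{A})$ through $\phi$, with Lemmas 2.5--2.7 supplying the algebraic identities and Lemma 1.4 supplying the identification of the two spaces.
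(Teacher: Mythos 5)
Your proof is correct and follows essentially the same route as the paper, which simply states that the theorem ``follows from lemma (2.6) and lemma (2.7)'' without elaboration; you have usefully filled in the computation $\alpha_M\alpha_M^{\star}=|c_M|^2 e_M$ via Lemma 2.5, and supplied the nonvanishing $c_M\neq 0$ (from $\chi_M(1)=\dim M>0$) that the paper leaves implicit but which is genuinely needed for the ``moreover'' clause about the characters forming a basis.
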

\begin{proof}
  Follows from lemma (2.6) and lemma (2.7).
 \end{proof}
 
 \section{\textbf{Group graded Algebra}} 
Before proceeding further we fix some notations:\\
$\bullet$ \G \ denotes a finite group. \\
$\bullet$ $1_{G}$ denotes the identity element of group \G.\\
$\bullet$ Rings are assumed to contain unity.\\
$\bullet$ All modules in this section  are right modules.

All results and definitions in this section are found in \cite{Cliffordtheory}. 
\begin{definition}
A $G$-graded ring \A \ is a ring with an internal direct sum decomposition:
$
\mathcal{A} = \sum_{\sigma \in G} \mathcal{A} _{\sigma} \ (\mbox{as additive groups}),
$
where the additive subgroups  \A$_{\sigma}$ of  \A \ satisfy:
$\mathcal{A}_{\sigma}\mathcal{A}_{\tau}\subseteq \mathcal{A}_{\sigma\tau}, \ \forall\ \sigma,\tau \in  G.
$
\end{definition}
The above decomposition is called as a $G$-grading of \A , and its summands \A$_{\sigma}$ are called as $\sigma$-components of \A. \
The compoment of \A \ corresponding to $1_{G}$ is a subring of \A\ and
 \A \ is an unital module over \A$_{1_G}$. 
 Also \A$_{\sigma}$ is an unital module over \A$_{1_G}$ for $\sigma$ $ \in G$.
 If \A \ is a $G$-graded ring and $H$ is a subgroup of $G$\ then naturally we get a $H$-graded ring from \A, \A$_{H}=\sum_{h\in H} \mathcal{A}_{h}.$ 

\begin{example} 

Let $G$ be a finite group and $H$ be a normal subgroup of $G$. Then $\C[G]=\oplus_{\sigma H\in G/H }\C[\sigma H]$  is a $G/H$-graded ring.
\end{example}

\begin{definition}
 A $G$-graded module over a $G$-graded ring \A \ is an \A -module $M$ with an internal 
direct sum decomposition: $M =\sum_{g\in G} M_{g},$ (as $ \mathcal{A}_{1_{G}}$-modules),
where \A$_{1_{G}}$-modules $M_{g}$\ satisfy:
$M_{g}\mathcal{A}_{\sigma}\subseteq M_{g\sigma},\ \forall
 \ g,\sigma\in G$.
\end{definition}
Suppose $M$\ is a $G$-graded module over a $G$-graded ring \A\ then we get a natural $H$-graded \A$_{H}$-module $M_{H}=\sum_{h\in H}M_{h}$. 
A $G$-graded \A -submodule $N$ of $M$ is an \A -submodule which has $G$-grading with $\sigma$-component,
$N_{\sigma}=M_{\sigma}\cap N,\ \ \forall\ \sigma\in G$.

\noindent
$\bullet$ Let $M$,$N$ be two G-graded \A -modules then a G-graded homomorphism $\varphi:M\rightarrow N$ is a right \A -module homomorphism such that $\varphi({M_{\sigma}})\subseteq N_{\sigma}, \forall\ \sigma\in G.$
\subsection{Tensor Products:} We continue with  same notation as in (3.3).
For any \A$_{H}$-module $N$, the tensor product $N\otimes\mathcal{A}=
N \otimes _{\mathcal{A}_{H}} \mathcal{A}$ 
is naturally an \A -module, with multiplication determined by: $(n\otimes r)s=n\otimes (rs), \forall \ n\in N$ and $r$,$s$ $\in$ \A.
\begin{theorem}[see \cite{Cliffordtheory} Theorem 4.3]
If $N$\ is a $H$-graded \A$_{H}$-module, then there is exactly one $G$-grading for the \A -module $N\otimes\mathcal{A}$\  making the latter into a 
$G$-graded \A  -module such that $\cdot \otimes 1 $\ is an isomorphism:
$$\cdot \otimes 1:N\xrightarrow{\sim}(N\otimes\mathcal{A})_{H}\ 
(\mbox{as}\ \mbox{H-graded}\ \mathcal{A}_{H} \mbox{-modules}).$$
The $\sigma$-components of this $G$-grading are given by:
$$(N\otimes\mathcal{A})_{\sigma}=\sum_{(h,g)\in P(\sigma)}(N_{h}\otimes 1)\mathcal{A}_{g}, \ \forall \ \sigma\in G$$
where $P(\sigma)=\{ (h,g)\in H\times G : hg=\sigma\}.$
\end{theorem}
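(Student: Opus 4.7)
The plan is to take the displayed formula as the \emph{definition} of the candidate subgroups $V_\sigma := \sum_{(h,g)\in P(\sigma)} (N_h\otimes 1)\mathcal{A}_g$ and then verify the grading axiom, spanning, directness, the identification of the $H$-part with $N$, and finally uniqueness. The axiom $V_\sigma \mathcal{A}_\tau \subseteq V_{\sigma\tau}$ is immediate from the formula since $(h,g)\in P(\sigma)$ and $\mathcal{A}_g\mathcal{A}_\tau\subseteq\mathcal{A}_{g\tau}$ yield $(h,g\tau)\in P(\sigma\tau)$. Spanning is similarly trivial: any elementary tensor decomposes as $n\otimes a=\sum_{h,g}(n_h\otimes 1)a_g$ after expanding into homogeneous pieces.

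The main obstacle is showing the sum $\sum_\sigma V_\sigma$ is direct. I would construct a grading projection $\pi\colon N\otimes_{\mathcal{A}_H}\mathcal{A}\to \bigoplus_\sigma V_\sigma$ via the $\mathbb{Z}$-bilinear map $N\times\mathcal{A}\to\bigoplus_\sigma V_\sigma$ whose $\sigma$-coordinate on $(n,a)$ is $\sum_{hg=\sigma} n_h\otimes a_g$. The key check is that this map is $\mathcal{A}_H$-balanced: for $b_{h'}\in\mathcal{A}_{h'}$ with $h'\in H$, both $(n_hb_{h'})\otimes a_g$ and $n_h\otimes(b_{h'}a_g)$ end up in the $hh'g$-coordinate, and they agree there because they agree already inside $N\otimes_{\mathcal{A}_H}\mathcal{A}$, which contains $V_{hh'g}$ as a subgroup. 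Composing $\pi$ with the natural summation map $\bigoplus V_\sigma\to N\otimes\mathcal{A}$ yields the identity on each $V_\sigma$, forcing that summation map to be injective and hence the $V_\sigma$ to be in direct sum.

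For the isomorphism with the $H$-part, when $\sigma=h\in H$ the condition $h'g=h$ with $h'\in H$ forces $g=h'^{-1}h\in H\subseteq \mathcal{A}_H$, so by moving $\mathcal{A}_{h'^{-1}h}$ across the tensor we obtain $V_h=\sum_{h'\in H}N_{h'}\mathcal{A}_{h'^{-1}h}\otimes 1\subseteq N_h\otimes 1$, and the reverse inclusion is witnessed by $(h',g)=(h,1_G)$; therefore $(N\otimes\mathcal{A})_H=N\otimes 1$ and $\cdot\otimes 1$ is the required $H$-graded $\mathcal{A}_H$-module isomorphism. Uniqueness is then easy: any $G$-grading $(W_\sigma)$ making $\cdot\otimes 1\colon N\to(N\otimes\mathcal{A})_H$ an $H$-graded isomorphism satisfies $N_h\otimes 1\subseteq W_h$ for $h\in H$, so the grading axiom gives $W_\sigma\supseteq(N_h\otimes 1)\mathcal{A}_g$ for every $(h,g)\in P(\sigma)$ and hence $W_\sigma\supseteq V_\sigma$; since both families give direct sum decompositions of the same module $N\otimes\mathcal{A}$, these inclusions must be equalities.
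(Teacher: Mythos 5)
Your overall strategy is the natural one (and the only comparison available is Dade's original argument, since the paper states this result without proof, citing \cite{Cliffordtheory}): take the displayed formula as the definition of $V_\sigma=\sum_{(h,g)\in P(\sigma)}(N_h\otimes 1)\mathcal{A}_g$, check $V_\sigma\mathcal{A}_\tau\subseteq V_{\sigma\tau}$ and spanning, obtain directness from a projection $\pi\colon N\otimes_{\mathcal{A}_H}\mathcal{A}\to\bigoplus_\sigma V_\sigma$ built from an $\mathcal{A}_H$-balanced biadditive map, compute $V_h=N_h\otimes 1$ for $h\in H$, and deduce uniqueness from the inclusions $V_\sigma\subseteq W_\sigma$ together with the fact that two direct sum decompositions with componentwise inclusions coincide. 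All of these steps are correct as written; in particular your balancedness check (both candidate images land in the single coordinate $hh'g$ and agree there because they are equal as elements of $N\otimes_{\mathcal{A}_H}\mathcal{A}$) and the identity $\pi\circ(\text{summation})=\mathrm{id}$ are fine.

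The one genuine gap is injectivity of $\cdot\otimes 1\colon N\to (N\otimes\mathcal{A})_H$. Your $H$-part computation establishes only that this map is a graded $\mathcal{A}_H$-homomorphism which is \emph{surjective} onto $(N\otimes\mathcal{A})_H=N\otimes 1$; nothing you prove excludes $n\otimes 1=0$ for some $n\neq 0$, and for a tensor product over a subring this is a real possibility in general, not a formality. The fix uses the grading once more: $\mathcal{A}=\mathcal{A}_H\oplus\bigl(\textstyle\sum_{g\notin H}\mathcal{A}_g\bigr)$ as left $\mathcal{A}_H$-modules, so the projection $p_H\colon\mathcal{A}\to\mathcal{A}_H$ killing the components outside $H$ satisfies $p_H(ba)=b\,p_H(a)$ for $b\in\mathcal{A}_H$; hence $(n,a)\mapsto n\,p_H(a)$ is $\mathcal{A}_H$-balanced and induces $q\colon N\otimes_{\mathcal{A}_H}\mathcal{A}\to N$ with $q(n\otimes 1)=n$ (using $1\in\mathcal{A}_{1_G}\subseteq\mathcal{A}_H$), so $\cdot\otimes 1$ admits a retraction and is injective. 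With this paragraph added your proof is complete; note also that your use of the pair $(h,1_G)\in P(h)$, and the computation of $q(n\otimes 1)$ above, both rely on the standard fact that the identity of a $G$-graded ring lies in $\mathcal{A}_{1_G}$, which is worth one explicit line.
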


\subsection{Induction}
The tensor product $N\otimes\mathcal{A}$ is the most natural $G$-graded \A -module associated with a $H$-graded \A$_{H}$-module.
However it is not the one that we use for induction.
The module that we use for induction is a quotient of $N\otimes\mathcal{A}$ by a G-graded \A -submodule of it. 

Let $M$ be a $G$-graded \A -module.
We say that an $G$-graded \A -submodule $U$ of $M$ is $\textbf{H-null}$ if $U_{H}=0$. 
Define a $H$-null $\textbf{Socle},\ S_{H}(M)$  of $M$ as the largest(under inclusion) $H$-null $G$-graded \A-submodule of $M$.

Now returning to our $H$-graded \A$_{H}$-module $N$ and associated $G$-graded \A -module $N\otimes\mathcal{A}$, we define $\textit{induced module}\  N\bar{\otimes}\mathcal{A}$ to be the $G$-graded \A -quotient module:
$$ N\overline{\otimes}\mathcal{A}=(N\otimes\mathcal{A})/S_{H}(N\otimes\mathcal{A}).$$
\subsection{Action of G on simple \A$_{1_G}$-modules}
\begin{definition}
A simple $G$-graded \A -module $M$ is a $G$-graded \A -module with no proper $G$-graded submodule. $\textit{i.e.}$, the only $G$-graded \A -submodules of $M$ are 0 and itself.
\end{definition}

Let $\Sigma$ = the set of all $G$-graded \A -modules. We define an action of $G$ on $\Sigma$ as,
for $M\in\Sigma$, $\tau \in G$, 
\begin{gather*}
M^{\tau}=M\ (\mbox{as}\ \mathcal{A}\mbox{-modules})\\
(M^{\tau})_{\sigma}=M_{\tau \sigma}, \ \forall \ \sigma\in G. 
\end{gather*}

\noindent
$\bullet$ If $M$ is a simple $G$-graded module then $M^{\tau}$ is also simple a $G$-graded module for all $\tau\in G$.

Define the $\textit{support}$ of a simple \G -graded \A -module $M$ as,
$$ Supp(M)=\{ \sigma\in G: M_{\sigma}\neq0\}.$$
\begin{lemma}[see \cite{Cliffordtheory} Lemma 6.1]
If $M$ is a simple $G$-graded \A -module, then $M_{H}$ is either 0 or a simple $H$-graded \A$_{H}$-module. In the latter case $M=M_{H}\mathcal{A}$ is H-generated and there is unique isomorphism
$$\delta=\delta_{m}:M\xrightarrow{\sim}M_{H}
\overline{\otimes}\mathcal{A}, \  (\mbox{as}\ G \mbox{-graded}\ \mathcal{A} \mbox{-modules})$$
such that:
$$\delta(mr)=m\bar{\otimes}r,\ \forall \ m\in M_{H}, \and r\in \mathcal{A}.$$
\end{lemma}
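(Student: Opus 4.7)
The plan is to first establish the dichotomy for $M_H$ and simultaneously obtain $M = M_H\mathcal{A}$ when $M_H \neq 0$, then to construct the natural map $\phi : M_H \otimes \mathcal{A} \to M$ determined by multiplication, and finally to show that $\phi$ descends to the desired isomorphism out of the induced module $M_H\overline{\otimes}\mathcal{A}$. Simplicity of $M$ as a $G$-graded module will be the main driver throughout.

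Suppose $M_H \neq 0$ and let $N$ be any nonzero $H$-graded $\mathcal{A}_H$-submodule of $M_H$. I would consider the right $\mathcal{A}$-submodule $N\mathcal{A}$ of $M$, which inherits a $G$-grading via $(N\mathcal{A})_\sigma = \sum_{hg=\sigma} N_h \mathcal{A}_g$ (the sum ranging over $h \in H$ because $N \subseteq M_H$). Since $N\mathcal{A} \neq 0$ and $M$ is simple $G$-graded, $N\mathcal{A} = M$. Taking $H$-components: for $\sigma \in H$, the equation $hg = \sigma$ with $h \in H$ forces $g \in H$, so $(N\mathcal{A})_H = N\mathcal{A}_H = N$ (using that $N$ is $\mathcal{A}_H$-stable and contains $N\cdot 1$). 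Hence $N = M_H$, proving $M_H$ is simple. Specialising $N = M_H$ yields $M_H\mathcal{A} = M$.

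Next I define $\phi : M_H \otimes \mathcal{A} \to M$ by $m \otimes r \mapsto mr$; this is well-defined over $\mathcal{A}_H$, right $\mathcal{A}$-linear, and $G$-graded by Theorem 3.4 (the $\sigma$-component $\sum_{hg=\sigma}(M_h \otimes 1)\mathcal{A}_g$ maps into $\sum_{hg=\sigma} M_h \mathcal{A}_g \subseteq M_\sigma$), and surjective since $M_H\mathcal{A} = M$. To factor $\phi$ through $M_H\overline{\otimes}\mathcal{A} = (M_H \otimes \mathcal{A})/S_H(M_H \otimes \mathcal{A})$ and obtain an isomorphism, it suffices to show $\ker\phi = S_H(M_H \otimes \mathcal{A})$. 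For $S_H \subseteq \ker\phi$: the image $\phi(S_H)$ is a $G$-graded $\mathcal{A}$-submodule of $M$ with $\phi(S_H)_H = \phi((S_H)_H) = 0$, so $\phi(S_H) \neq M$ (as $M_H \neq 0$), forcing $\phi(S_H) = 0$ by simplicity of $M$. Conversely, Theorem 3.4 identifies $(M_H \otimes \mathcal{A})_H$ with $M_H \otimes 1$ via $m \mapsto m \otimes 1$, on which $\phi$ acts as the isomorphism $m \otimes 1 \mapsto m$; thus $(\ker\phi)_H = 0$, so $\ker\phi$ is $H$-null and hence contained in $S_H$. The induced map $\overline{\phi} : M_H\overline{\otimes}\mathcal{A} \to M$ is therefore a $G$-graded $\mathcal{A}$-module isomorphism, and $\delta := \overline{\phi}^{-1}$ satisfies $\delta(mr) = m\overline{\otimes}r$ by construction.

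Uniqueness follows from $M = M_H\mathcal{A}$: any $G$-graded $\mathcal{A}$-linear $\delta$ satisfying the prescribed formula is forced to act as $\delta(m) = m\overline{\otimes}1$ on the generating set $M_H$, hence is determined on all of $M$ by right $\mathcal{A}$-linearity. I expect the main obstacle to be the identification $\ker\phi = S_H(M_H \otimes \mathcal{A})$: both containments are needed and each depends on a different ingredient---the $H$-nullity of $\ker\phi$ rests on the explicit description of $(M_H \otimes \mathcal{A})_H$ from Theorem 3.4, while $S_H \subseteq \ker\phi$ is pushed through by the simplicity of $M$ as a $G$-graded module together with the vanishing of the $H$-component of $\phi(S_H)$.
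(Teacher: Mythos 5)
The paper offers no proof of this lemma to compare against --- it is quoted from Dade (\cite{Cliffordtheory}, Lemma 6.1) --- so your argument can only be judged on its own terms, and it is correct, essentially reconstructing the standard proof: the computation $(N\mathcal{A})_H = N\mathcal{A}_H = N$ for a nonzero $H$-graded $\mathcal{A}_H$-submodule $N\subseteq M_H$ simultaneously gives simplicity of $M_H$ and $M=M_H\mathcal{A}$, and identifying $\ker\phi$ with $S_H(M_H\otimes\mathcal{A})$ for the multiplication map $\phi$ is exactly the right mechanism for descending to $M_H\overline{\otimes}\mathcal{A}$, with uniqueness forced by $M=M_H\mathcal{A}$. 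The only step worth spelling out is the routine fact that kernels and images of $G$-graded homomorphisms are $G$-graded submodules, which you use implicitly when concluding that $\ker\phi$ is $H$-null (hence lies in the $H$-null socle) and that $\phi\bigl(S_H(M_H\otimes\mathcal{A})\bigr)$ is a graded submodule of $M$ killed by simplicity.
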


\begin{corollary}
Let $M$ be a simple $G$-graded \A -module such that $M_{\tau}\neq 0$ for some $\tau\in G$. Then $M_{\tau}$ is a simple $A_{1_{G}}$-module.
\end{corollary}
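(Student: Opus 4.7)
The plan is to reduce the corollary to Lemma 3.6 by exploiting the $G$-action on the set $\Sigma$ of $G$-graded $\mathcal{A}$-modules, which lets us shift the nonzero component $M_\tau$ into the identity position.

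First I would form the twisted module $N := M^{\tau}$. By the definition of the $G$-action recalled just before the lemma, we have $N = M$ as an $\mathcal{A}$-module with re-indexed grading $N_\sigma = M_{\tau\sigma}$; in particular $N_{1_G} = M_\tau \neq 0$. The paper already records that the $G$-action preserves simplicity of $G$-graded modules, so $N$ is again a simple $G$-graded $\mathcal{A}$-module.

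Now I would apply Lemma 3.6 to $N$ with the trivial subgroup $H = \{1_G\}$. Since $H$ is trivial, $\mathcal{A}_H = \mathcal{A}_{1_G}$ and the notion of an $H$-graded $\mathcal{A}_H$-module degenerates to that of an ordinary $\mathcal{A}_{1_G}$-module, with $N_H = N_{1_G} = M_\tau$. Because $N_H \neq 0$, the dichotomy in Lemma 3.6 forces $N_H$ to be a simple $H$-graded $\mathcal{A}_H$-module, i.e. $M_\tau$ is a simple $\mathcal{A}_{1_G}$-module. This gives the corollary.

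There is no real obstacle here beyond checking that the twisting $M \mapsto M^\tau$ genuinely maps simple $G$-graded modules to simple $G$-graded modules and that it shifts the $\tau$-component to the $1_G$-component; both are either definitional or already granted in the excerpt. The entire content of the corollary is therefore the observation that Lemma 3.6, stated only for the identity-component, propagates to every non-vanishing homogeneous component via the $G$-action.
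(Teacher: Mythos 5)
Your argument is correct and is exactly the route the paper intends: twist by $\tau$ so that $(M^{\tau})_{1_G}=M_{\tau}\neq 0$, use the recorded fact that the twist preserves graded simplicity, and apply Lemma 3.6 with $H=\{1_G\}$, where graded simplicity of $N_H$ coincides with simplicity as an $\mathcal{A}_{1_G}$-module. Nothing is missing.
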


\begin{lemma}[see \cite{Cliffordtheory} Lemma 6.3]
If $N$ is a simple $H$-graded \A$_{H}$-module, then any $G$-graded \A -submodule $U$ of $N\otimes\mathcal{A}$ is either contained in $S_{H}(N\otimes\mathcal{A})$ or equal to $N\otimes\mathcal{A}$. Hence $N
\overline{\otimes}\mathcal{A}$ is  a simple $G$-graded \A -module with $(N\overline{\otimes}\mathcal{A})_{H}\neq 0$.
\end{lemma}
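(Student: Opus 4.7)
The plan is to reduce the simplicity statement to the dichotomy on submodules of $N\otimes\mathcal{A}$, and then to extract this dichotomy from the simplicity of $N$ together with the isomorphism $\cdot\otimes 1$ from Theorem 3.4.

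First I would start with an arbitrary $G$-graded $\mathcal{A}$-submodule $U\subseteq N\otimes\mathcal{A}$ and look at its $H$-part $U_H=\sum_{h\in H}U_h$. Since $U$ inherits a $G$-grading with $U_\sigma=U\cap(N\otimes\mathcal{A})_\sigma$, the piece $U_H$ is naturally an $H$-graded $\mathcal{A}_H$-submodule of $(N\otimes\mathcal{A})_H$. Now Theorem 3.4 gives the isomorphism $\cdot\otimes 1:N\xrightarrow{\sim}(N\otimes\mathcal{A})_H$ of $H$-graded $\mathcal{A}_H$-modules, so $U_H$ pulls back to an $H$-graded $\mathcal{A}_H$-submodule of $N$. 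Since $N$ is simple as an $H$-graded $\mathcal{A}_H$-module, this pullback is either $0$ or all of $N$.

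If $U_H=0$, then $U$ is $H$-null, so by the maximality in the definition of $S_H(N\otimes\mathcal{A})$ we have $U\subseteq S_H(N\otimes\mathcal{A})$. If instead $U_H=(N\otimes\mathcal{A})_H=N\otimes 1$, then $U$ contains $N\otimes 1$, and since $U$ is a right $\mathcal{A}$-submodule and every element of $N\otimes\mathcal{A}$ has the form $\sum_i(n_i\otimes 1)\cdot a_i$ with $n_i\in N$ and $a_i\in\mathcal{A}$, we conclude $U=N\otimes\mathcal{A}$. This proves the dichotomy.

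The simplicity of $N\bar{\otimes}\mathcal{A}$ is then immediate from the graded correspondence theorem: any nonzero $G$-graded $\mathcal{A}$-submodule of the quotient lifts to a $G$-graded $\mathcal{A}$-submodule $U$ of $N\otimes\mathcal{A}$ strictly containing $S_H(N\otimes\mathcal{A})$, and the dichotomy forces $U=N\otimes\mathcal{A}$, so the lifted submodule is everything. Finally, $(N\bar{\otimes}\mathcal{A})_H$ is computed by taking the $H$-part of the quotient, which is $(N\otimes\mathcal{A})_H/S_H(N\otimes\mathcal{A})_H=(N\otimes\mathcal{A})_H$ since $S_H$ is $H$-null by definition; this is isomorphic to $N\neq 0$.

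The only potentially subtle step is the very first one, namely checking that $U_H$ is closed under the $\mathcal{A}_H$-action and translates under $\cdot\otimes 1$ to a genuine $H$-graded $\mathcal{A}_H$-submodule of $N$; this is a direct degree count using $U_k\cdot\mathcal{A}_h\subseteq U_{kh}$ for $k,h\in H$, and the rest of the argument is formal once this is in place.
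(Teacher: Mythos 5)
Your argument is correct. Note that the paper does not prove this statement at all: it is quoted from Dade's Clifford theory paper (Lemma 6.3 there), and your proof is essentially the standard one, namely the one Dade gives — transport $U_H$ through the isomorphism $\cdot\otimes 1:N\xrightarrow{\sim}(N\otimes\mathcal{A})_H$ of Theorem 3.4, use simplicity of $N$ to get the dichotomy $U_H=0$ or $U_H=(N\otimes\mathcal{A})_H$, and in the second case use that $N\otimes 1$ generates $N\otimes\mathcal{A}$ over $\mathcal{A}$. The only point you use implicitly is that every $H$-null $G$-graded submodule is contained in $S_H(N\otimes\mathcal{A})$; this is exactly the maximality built into the definition of the $H$-null socle, and it is legitimate because the sum of graded submodules is graded with $\sigma$-components the sums of the $\sigma$-components, so the sum of all $H$-null graded submodules is again $H$-null. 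Your computation $(N\overline{\otimes}\mathcal{A})_H=(N\otimes\mathcal{A})_H/S_H(N\otimes\mathcal{A})_H\cong N\neq 0$ also correctly supplies the nonvanishing needed for simplicity of the quotient.
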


We define a partial action of \G \ on simple \A $_{1_{G}}$-modules as follows:\\
For $\sigma\in G$ and for simple \A $_{1_{G}}$-module $V$,
$$V^{\sigma}=((V\overline{\otimes}\mathcal{A})^{\sigma})_{1_{G}} =(V\bar{\otimes}\mathcal{A})_{\sigma}$$
by lemma (3.8) $V^{\sigma}$ is a simple $\mathcal{A}_{1_{G}}$-module, if $\sigma \in Supp(V\overline{\otimes}\mathcal{A})$ and $V^{\sigma} = 0$, if $\sigma \notin$ Supp($V\overline{\otimes}\mathcal{A}$).

We say that two simple \A$_{1_{G}}$-modules $U$ and $V$ are \G -$\textit{conjugate}$ if there exists a $\tau\in$ \G \ such that $U^{\tau}$ is \A$_{1_{G}}$- isomorphic to $V$.

\noindent
$\bullet\ G\{U\}=\{\sigma\in G:U^{\sigma}\cong U \ \mbox{as}\ \mathcal{A}_{1_{G}} \mbox{-modules}\}.$
\begin{lemma}[see \cite{Cliffordtheory} Theorem 7.17]
Let $U$ be a simple \A $_{1_{G}}$-module such that for all $\tau\in G$, $U$ and $U^{\tau}$ are isomorphic as \A$_{1_{G}}$-modules then $U\overline{\otimes}\mathcal{A}$ is $U$-primary.
\end{lemma}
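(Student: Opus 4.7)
The plan is to unpack the notation and exhibit $U\bar{\otimes}\mathcal{A}$, viewed as an $\mathcal{A}_{1_G}$-module by restriction, as an internal direct sum of simple submodules each isomorphic to $U$. Everything is already packaged in the lemmas and definitions immediately preceding the statement, so the work is essentially a bookkeeping exercise.

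First I would take $H=\{1_G\}$, so that $U$ is trivially an $H$-graded $\mathcal{A}_H$-module, and apply Lemma (3.8) to conclude that $U\bar{\otimes}\mathcal{A}$ is a simple $G$-graded $\mathcal{A}$-module. The $G$-grading then gives a decomposition
\[
U\bar{\otimes}\mathcal{A} \;=\; \bigoplus_{\sigma\in G}(U\bar{\otimes}\mathcal{A})_\sigma
\]
which is already a decomposition as an $\mathcal{A}_{1_G}$-module, since each $\sigma$-component is stable under right multiplication by $\mathcal{A}_{1_G}$ via $(U\bar{\otimes}\mathcal{A})_\sigma\cdot\mathcal{A}_{1_G}\subseteq(U\bar{\otimes}\mathcal{A})_{\sigma\cdot 1_G}=(U\bar{\otimes}\mathcal{A})_\sigma$.

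Next, by Corollary (3.7), every nonzero graded piece $(U\bar{\otimes}\mathcal{A})_\sigma$ is a simple $\mathcal{A}_{1_G}$-module; and by the definition of the partial action of $G$ displayed just before the lemma, we have precisely $(U\bar{\otimes}\mathcal{A})_\sigma = U^\sigma$. The hypothesis $U^\sigma \cong U$ for every $\sigma \in G$ then does two things at once: since $U$ is simple it is nonzero, so $U^\sigma \neq 0$ forces $\sigma \in \mathrm{Supp}(U\bar{\otimes}\mathcal{A})$, giving $\mathrm{Supp}(U\bar{\otimes}\mathcal{A}) = G$; and, on each such component, the isomorphism $U^\sigma \cong U$ is an isomorphism of simple $\mathcal{A}_{1_G}$-modules.

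Combining these observations, $U\bar{\otimes}\mathcal{A}$ is the internal direct sum (indexed by $\sigma\in G$) of simple $\mathcal{A}_{1_G}$-submodules, each isomorphic to $U$. This is exactly the assertion that $U\bar{\otimes}\mathcal{A}$ is $U$-primary. The only step that I would flag explicitly is noticing that the hypothesis tacitly rules out the possibility that some $\sigma$ lies outside $\mathrm{Supp}(U\bar{\otimes}\mathcal{A})$; once that is observed, there is no genuine obstacle to overcome.
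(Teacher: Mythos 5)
Your argument is correct, but there is nothing in the paper to compare it against: this lemma is one of the Section 3 statements quoted without proof from Dade (Theorem 7.17 of \cite{Cliffordtheory}), so the paper supplies no argument of its own. What you have done is show that, inside the framework the paper sets up, the statement follows formally from the other quoted results, and your chain of reductions is sound: Lemma 3.8 with $H=\{1_G\}$ makes $U\overline{\otimes}\mathcal{A}$ a simple $G$-graded $\mathcal{A}$-module; Corollary 3.7 then makes every nonzero graded component a simple $\mathcal{A}_{1_G}$-module; the definition of the partial action literally identifies $(U\overline{\otimes}\mathcal{A})_{\sigma}$ with $U^{\sigma}$; and the invariance hypothesis, together with $U\neq 0$, forces each $U^{\sigma}$ to be nonzero and isomorphic to $U$. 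Since the grading is an internal direct sum of $\mathcal{A}_{1_G}$-submodules, the restriction of $U\overline{\otimes}\mathcal{A}$ to $\mathcal{A}_{1_G}$ is a direct sum of $\abs{G}$ copies of $U$, which is precisely the assertion that it is $U$-primary (indeed full support is not even needed for this conclusion, only that every nonzero component is isomorphic to $U$). Your route trades the generality of the cited source, which establishes the result within its own structure theory, for a short bookkeeping derivation that treats Lemma 3.8 and Corollary 3.7 as black boxes; given that the paper already assumes those statements, that is a legitimate and efficient way to make the lemma self-contained here, and your closing remark correctly identifies the one point that needs flagging, namely that $U^{\tau}\cong U$ silently rules out $\tau$ lying outside the support because the paper sets $U^{\tau}=0$ there.
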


Let $V$ be a simple $A_{1_{G}}$-module then we say that an \A -module $\textbf{M lies over}$ $\textbf{ V}$  if it satisfies: \\
$\bullet$ $M$ is a semisimple as an \A$_{1_{G}}$-module. \\
$\bullet$ The $U$-primary component $M\{ U\}$ of $M$ is 0 for any simple \A$_{1_{G}}$-module $U$ not conjugate to $V$.\\
$\bullet$ The \A$_{1_{G}}$-submodule $M\{ U\}\mathcal{A}_{\tau}$ of $M$ is 0 whenever $\tau\in G$ and $U^{\tau}=0$.

\begin{definition}{(Graded endomorphism ring)}
Let $U$ be a simple \A$_{1_{G}}$-module, a graded endomorphism ring $\mathcal{E}(U)$ corresponding to $U$ is defined as $\mathcal{E}(U)=\sum_{\sigma\in G}\mathcal{E}(U)_{\sigma}$, where
$$\mathcal{E}(U)_{\sigma}=\{\phi\in End_{\mathcal{A}}(U\overline{\otimes}\mathcal{A}): \phi\left((U\overline{\otimes}\mathcal{A})_{\tau}\right)\subseteq(U\overline{\otimes}\mathcal{A})_{\sigma\tau}, \forall\ \tau \in G\}$$
\end{definition}

\noindent
$\bullet\ \mathcal{F}(U)$ denotes a End$_{\mathcal{A}_{1_{G}}}(U)$ for any \A$_{1_{G}}$-module $U$. 
 
\noindent
$\bullet\ \mathcal{F}(U)\cong \mathcal{E}(U)_{1_{G}}$ as a ring.
 \begin{theorem}[see \cite{Cliffordtheory} proposition 9.4]
 For any $\sigma\in G$ the non zero elements \p\ of $\mathcal{E}(U)_{\sigma}$ are precisely the isomorphisms:
 
 $$\phi:U\overline{\otimes}\mathcal{A}\xrightarrow{\sim}(U\overline{\otimes}\mathcal{A})^{\sigma}\ (\mbox{as graded }\ \mathcal{A}\mbox{-modules})$$
 Hence such a \p \ exists if and only if $\sigma\in G\{U\}$.
 Any such \p \ lies in $U(\mathcal{E}(U))\cap\mathcal{E}(U)_{\sigma}$.
 Therefore, $\mathcal{E}(U)$ is equal to its $G\{U\}$-graded subring $\mathcal{E}(U)_{G\{U\}}$, which is a crossed product of $G\{U\}$ over the division ring $\mathcal{E}(U)_{1_{G}}\cong End_{\mathcal{A}_{1_{G}}}(U).$
 \end{theorem}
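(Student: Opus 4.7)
The strategy is to recognize $\mathcal{E}(U)_\sigma$ as a space of graded homomorphisms, then invoke a graded version of Schur's lemma using the simplicity of $U\overline{\otimes}\mathcal{A}$ supplied by Lemma 3.8.

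First I would unwind the definition: because $\bigl((U\overline{\otimes}\mathcal{A})^\sigma\bigr)_\tau=(U\overline{\otimes}\mathcal{A})_{\sigma\tau}$ by the definition of the grading shift, the defining condition $\phi((U\overline{\otimes}\mathcal{A})_\tau)\subseteq(U\overline{\otimes}\mathcal{A})_{\sigma\tau}$ says exactly that $\phi$ is a $G$-graded $\mathcal{A}$-module homomorphism from $U\overline{\otimes}\mathcal{A}$ to $(U\overline{\otimes}\mathcal{A})^\sigma$. Both source and target are simple $G$-graded $\mathcal{A}$-modules (Lemma 3.8, together with the immediate observation that the shift of a simple graded module is simple), so the kernel and image of any nonzero such $\phi$ are forced to be $0$ and the whole target respectively, making $\phi$ an isomorphism. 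This proves the first assertion.

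Next I would settle the existence criterion. If such an isomorphism $\phi$ exists, restricting to $1_G$-components yields an $\mathcal{A}_{1_G}$-isomorphism $U\cong U^\sigma$, so $\sigma\in G\{U\}$. Conversely, if $\sigma\in G\{U\}$ then $U^\sigma\neq 0$; hence $\sigma\in\mathrm{Supp}(U\overline{\otimes}\mathcal{A})$, and $\bigl((U\overline{\otimes}\mathcal{A})^\sigma\bigr)_{1_G}=U^\sigma$ is a simple $\mathcal{A}_{1_G}$-module by Corollary 3.9, isomorphic to $U$ by assumption. Applying Lemma 3.7 with $H=\{1_G\}$ to the simple graded module $(U\overline{\otimes}\mathcal{A})^\sigma$ identifies it graded-isomorphically with $U^\sigma\overline{\otimes}\mathcal{A}$, and the functoriality of the bar-tensor construction turns any $\mathcal{A}_{1_G}$-isomorphism $U\cong U^\sigma$ into a graded isomorphism $U\overline{\otimes}\mathcal{A}\xrightarrow{\sim}(U\overline{\otimes}\mathcal{A})^\sigma$, as required.

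For the unit statement and the crossed-product conclusion: the inverse of a nonzero $\phi\in\mathcal{E}(U)_\sigma$ is, by the same graded-iso interpretation, a nonzero element of $\mathcal{E}(U)_{\sigma^{-1}}$, so $\phi\in U(\mathcal{E}(U))\cap\mathcal{E}(U)_\sigma$. Combining the preceding points, $\mathcal{E}(U)_\sigma=0$ unless $\sigma\in G\{U\}$, so $\mathcal{E}(U)=\mathcal{E}(U)_{G\{U\}}$; Schur's lemma gives that $\mathcal{E}(U)_{1_G}\cong\mathrm{End}_{\mathcal{A}_{1_G}}(U)$ is a division ring; and for each $\sigma\in G\{U\}$ the quotient of any two nonzero elements of $\mathcal{E}(U)_\sigma$ lies in $\mathcal{E}(U)_{1_G}$, so $\mathcal{E}(U)_\sigma$ is free of rank one over $\mathcal{E}(U)_{1_G}$ generated by a unit, which is exactly the data of a crossed product of $G\{U\}$ over $\mathcal{E}(U)_{1_G}$. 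The main obstacle I anticipate is the converse half of the existence step, i.e., producing a graded isomorphism from the mere knowledge $U^\sigma\cong U$; framing this through Lemma 3.7 and functoriality of $-\overline{\otimes}\mathcal{A}$ is what makes it go through cleanly.
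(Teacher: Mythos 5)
The paper does not actually prove this statement: it is quoted from Dade's paper (\cite{Cliffordtheory}, Proposition 9.4) and, as announced at the start of Section 3, all Clifford-theoretic results there are stated without proof. So there is no internal argument to compare against; judged on its own, your proof is correct and is essentially the standard derivation. The key points all check out: the defining condition of $\mathcal{E}(U)_{\sigma}$ is exactly that $\phi$ is a graded homomorphism $U\overline{\otimes}\mathcal{A}\rightarrow (U\overline{\otimes}\mathcal{A})^{\sigma}$; kernel and image of such a $\phi$ are graded submodules, so simplicity of $U\overline{\otimes}\mathcal{A}$ (Lemma 3.8 with $H=\{1_{G}\}$) and of its shift forces any nonzero $\phi$ to be an isomorphism; the existence criterion follows by restricting to $1_{G}$-components in one direction and, in the other, by combining Lemma 3.7 applied to the simple graded module $(U\overline{\otimes}\mathcal{A})^{\sigma}$ (whose $1_{G}$-component is $U^{\sigma}\neq 0$) with functoriality of $-\overline{\otimes}\mathcal{A}$; and the unit and crossed-product statements follow since each nonzero homogeneous element is invertible with inverse in $\mathcal{E}(U)_{\sigma^{-1}}$ and $\mathcal{E}(U)_{1_{G}}\cong \mathrm{End}_{\mathcal{A}_{1_{G}}}(U)$ is a division ring by Schur. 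Two small steps deserve an explicit word if you write this up: that a bijective graded homomorphism has graded inverse (used both when you restrict a graded isomorphism to $1_{G}$-components and when you place $\phi^{-1}$ in $\mathcal{E}(U)_{\sigma^{-1}}$), and that an $\mathcal{A}_{1_{G}}$-isomorphism $U\cong U^{\sigma}$ induces a map $U\otimes\mathcal{A}\rightarrow U^{\sigma}\otimes\mathcal{A}$ carrying the null socle onto the null socle, so that it really descends to the induced modules $\overline{\otimes}$; both are routine verifications.
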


\begin{theorem}[see \cite{Cliffordtheory} Theorem 10.5]
If $M$ is any $\mathcal{E}(U)$-module, then there is an \A$_{1_{G}}$-homomorphism
 $\lambda_{M}=M\otimes(\cdot\ \overline{\otimes}\ 1):M\otimes_{\mathcal{F}(U)}U\rightarrow M\otimes_{\mathcal{E}(U)}(U\overline{\otimes}\mathcal{A})$ 
 sending $m\otimes u$ to $m\otimes(u\bar{\otimes}1)$, for any $m\in M$ 
 and $u\in U$.
 This $\lambda_{M}$ is an \A$_{1_{G}}$-isomorphism of $M\otimes_{\mathcal{F}(U)} U$ onto the $U$-primary component $M\otimes_{\mathcal{E}(U)}(U\overline{\otimes}\mathcal{A})\{U\}$ of $M\otimes_{\mathcal{E}(U)}(U\overline{\otimes}\mathcal{A}).$
\end{theorem}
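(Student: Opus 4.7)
The plan is to prove the theorem in two stages: first verify that $\lambda_M$ is a well-defined $\mathcal{A}_{1_G}$-homomorphism, and then invoke the crossed product description of $\mathcal{E}(U)$ from Theorem 3.11 to identify its image as the $U$-primary component.

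For well-definedness, I would use the canonical $\mathcal{A}_{1_G}$-isomorphism $U \xrightarrow{\sim} (U \bar{\otimes} \mathcal{A})_{1_G}$, $u \mapsto u \bar{\otimes} 1$, provided by Lemma 3.8. Since $\mathcal{F}(U)$ embeds in $\mathcal{E}(U)$ as its degree-$1_G$ component, and any $f \in \mathcal{F}(U)$ acts on $U$ via the restriction of its action (as a graded endomorphism of $U \bar{\otimes} \mathcal{A}$) to the identity grading piece, one has $fu \bar{\otimes} 1 = f \cdot (u \bar{\otimes} 1)$. Consequently the map $(m,u) \mapsto m \otimes (u \bar{\otimes} 1)$ is $\mathcal{F}(U)$-balanced and descends to $\lambda_M$ on $M \otimes_{\mathcal{F}(U)} U$; $\mathcal{A}_{1_G}$-linearity is inherited from that of $u \mapsto u \bar{\otimes} 1$.

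For the identification of the image, I would invoke Theorem 3.11: choosing units $u_\sigma \in \mathcal{E}(U)_\sigma$ for each $\sigma \in G\{U\}$ (with $u_{1_G} = 1$) yields $\mathcal{E}(U) = \bigoplus_{\sigma \in G\{U\}} u_\sigma \mathcal{F}(U)$ as a right $\mathcal{F}(U)$-module, and each $u_\sigma$ restricts to an $\mathcal{A}_{1_G}$-isomorphism $(U \bar{\otimes} \mathcal{A})_{1_G} \xrightarrow{\sim} (U \bar{\otimes} \mathcal{A})_\sigma$. Because $\mathcal{E}(U)_\sigma$ shifts grading by $\sigma \in G\{U\}$, the module $U \bar{\otimes} \mathcal{A}$ decomposes as a left $\mathcal{E}(U)$-module into a direct sum indexed by cosets $c \in G\{U\} \backslash G$, with pieces $(U \bar{\otimes} \mathcal{A})_c := \bigoplus_{\sigma \in c} (U \bar{\otimes} \mathcal{A})_\sigma$. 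The identity-coset summand is naturally isomorphic to $\mathcal{E}(U) \otimes_{\mathcal{F}(U)} (U \bar{\otimes} \mathcal{A})_{1_G}$ via the evaluation map $r \otimes y \mapsto r \cdot y$, so tensoring with $M$ over $\mathcal{E}(U)$ produces
\begin{equation*}
M \otimes_{\mathcal{E}(U)} (U \bar{\otimes} \mathcal{A})_{G\{U\}} \cong M \otimes_{\mathcal{F}(U)} (U \bar{\otimes} \mathcal{A})_{1_G} \cong M \otimes_{\mathcal{F}(U)} U,
\end{equation*}
which is precisely the image of $\lambda_M$. Finally, since $(U \bar{\otimes} \mathcal{A})_\sigma = U^\sigma$ is isomorphic to $U$ exactly when $\sigma \in G\{U\}$ and is otherwise either zero or a non-conjugate simple $\mathcal{A}_{1_G}$-module, the identity-coset summand coincides with the $U$-primary component of $M \otimes_{\mathcal{E}(U)} (U \bar{\otimes} \mathcal{A})$.

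The main obstacle I anticipate is justifying the coset decomposition cleanly and verifying that the non-identity coset summands contribute only non-$U$-primary $\mathcal{A}_{1_G}$-submodules after tensoring with $M$. This demands careful bookkeeping of the $\mathcal{E}(U)$-action on the grading of $U \bar{\otimes} \mathcal{A}$, together with semisimplicity of the relevant $\mathcal{A}_{1_G}$-modules, to ensure that the identification of the $U$-primary component with the image of $\lambda_M$ survives the tensor product.
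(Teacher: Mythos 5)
The paper does not prove this theorem; it is imported verbatim (as Theorem 3.12 here) from Dade's paper with a bare citation to his Theorem 10.5, so there is no ``paper proof'' to compare against. Your reconstruction is the right shape, and the core mechanism --- use the crossed-product structure of $\mathcal{E}(U)$ over $\mathcal{F}(U)$ from Theorem 3.11 to split $U\overline{\otimes}\mathcal{A}$ into $\mathcal{E}(U)$-invariant pieces indexed by the right cosets $G\{U\}\backslash G$, identify the $G\{U\}$-coset piece with $\mathcal{E}(U)\otimes_{\mathcal{F}(U)}(U\overline{\otimes}\mathcal{A})_{1_G}$ via evaluation, and cancel the $\mathcal{E}(U)$'s --- is sound and does produce $\lambda_M$ on the nose after chasing $m\otimes u\mapsto m\otimes 1\otimes(u\overline{\otimes}1)\mapsto m\otimes(u\overline{\otimes}1)$.

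Two points should be tightened. First, a terminology slip: for $\sigma\notin G\{U\}$ the nonzero component $(U\overline{\otimes}\mathcal{A})_\sigma\cong U^\sigma$ is, by definition, \emph{conjugate} to $U$; what you mean and need is that it is not \emph{isomorphic} to $U$. Second, the ``main obstacle'' you flag at the end --- that the non-identity coset summands contribute nothing $U$-primary after tensoring --- has a clean resolution you should supply rather than defer. Observe that the surjection $M\otimes_{\mathbb{C}}(U\overline{\otimes}\mathcal{A})\twoheadrightarrow M\otimes_{\mathcal{E}(U)}(U\overline{\otimes}\mathcal{A})$ is an $\mathcal{A}_{1_G}$-map (the $\mathcal{A}_{1_G}$-action is through the second factor), and the source is semisimple as an $\mathcal{A}_{1_G}$-module, with simple constituents exactly the nonzero $U^\sigma$ by Corollary 3.7. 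Hence the target is also semisimple, and for a surjection of semisimple modules the $U$-primary part of the target is the image of the $U$-primary part of the source, which is precisely the image of $M\otimes_{\mathcal{E}(U)}(U\overline{\otimes}\mathcal{A})_{G\{U\}}$. Combined with your identification of that piece with $M\otimes_{\mathcal{F}(U)}U$ and the fact that tensoring over $\mathcal{E}(U)$ preserves the split injection $(U\overline{\otimes}\mathcal{A})_{G\{U\}}\hookrightarrow U\overline{\otimes}\mathcal{A}$ (so $\lambda_M$ is injective, not merely surjective onto the $U$-primary part), this closes the argument.
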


The category $\textbf{Mod}(\mathcal{A}\mid U)$ of \A -modules which lie over $U$ and the category $\textbf{Mod}(\mathcal{E}(U))$ of $\mathcal{E}(U)$-modules are both abelian categories.
Let $\cdot \langle 
U\rangle$ denote the additive functor $\mbox{Hom}_{\mathcal{A}}(U\overline{\otimes}\mathcal{A}, \cdot)$ 
 from $\textbf{Mod}(\mathcal{A}|U)$ to $\textbf{Mod}(\mathcal{E}(U))$ and $\left( \cdot \right)^{\mathcal{A}}=\cdot\ \otimes_{\mathcal{E}(U)}(U\overline{\otimes}\mathcal{A})$ be the additive functor in the opposite direction, \textit{i.e.}, from 
 $\textbf{Mod}(\mathcal{E}(U))$ to  $\textbf{Mod}(\mathcal{A}|U)$, 
\begin{theorem}[see \cite{Cliffordtheory} Theorem 10.5]
The additive functors 
$$
\cdot \langle U\rangle= \mbox{Hom}_{\mathcal{A}}(U\overline{\otimes}\mathcal{A}, \cdot), \ \mbox{and}\ \left( \cdot \right)^{\mathcal{A}}=\cdot\ \otimes_{\mathcal{E}(U)}(U\overline{\otimes}\mathcal{A}) 
$$
form an equivalence between the abelian categories $\textbf{Mod}(\mathcal{A}|U)$ and $\textbf{Mod}(\mathcal{E}(U))$ for any finite group \G, any \G -graded ring \A \ and any simple \A$_{1_{G}}$ simple module $U$.
\end{theorem}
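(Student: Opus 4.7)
The theorem is the graded analogue of classical Morita equivalence, with $P := U\overline{\otimes}\mathcal{A}$ playing the role of progenerator for the category $\textbf{Mod}(\mathcal{A}\mid U)$ and with $\mathcal{E}(U)$ as its ``graded'' endomorphism ring. My plan is the following: first verify the two functors are well defined with values in the correct categories and adjoint to one another; then reduce the claim that the unit and counit are isomorphisms to the identification $\mathrm{End}_{\mathcal{A}}(P) = \mathcal{E}(U)$ (as ungraded rings) together with Theorem 3.13.

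I would start by making the $\mathcal{E}(U)$-module structure on $M\langle U\rangle = \mathrm{Hom}_{\mathcal{A}}(P,M)$ precise: an element of $\mathcal{E}(U)_\sigma$ acts on a homomorphism by precomposition. For an $\mathcal{E}(U)$-module $N$, I would check that $N^{\mathcal{A}} = N\otimes_{\mathcal{E}(U)}P$ lies over $U$. Its $G$-grading is inherited from that of $P$; Theorem 3.13 identifies the $U$-primary component of the $\mathcal{A}_{1_G}$-restriction as $N\otimes_{\mathcal{F}(U)}U$, while Theorem 3.12 (applied to the crossed-product structure of $\mathcal{E}(U)$ over $G\{U\}$) guarantees that only $G$-conjugates of $U$ appear as $\mathcal{A}_{1_G}$-composition factors and that the compatibility conditions with the $\mathcal{A}_\tau$-action hold. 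The tensor-hom adjunction then supplies a canonical adjunction between the two functors.

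Next I would verify that the counit $\epsilon_N : (N^{\mathcal{A}})\langle U\rangle \to N$ and the unit $\eta_M : M \to (M\langle U\rangle)^{\mathcal{A}}$ are natural isomorphisms. For $\epsilon_N$, both functors are right exact in $N$, so by choosing a free presentation of $N$ by copies of $\mathcal{E}(U)$ the claim reduces to the case $N = \mathcal{E}(U)$, which amounts exactly to $\mathrm{End}_{\mathcal{A}}(P)\cong \mathcal{E}(U)$. For $\eta_M$, I would use the fact that an $M$ lying over $U$ is $\mathcal{A}$-generated by its $U$-primary component $M\{U\}$ via the shifts $M\{U\}\mathcal{A}_\tau$ for $\tau\in \mathrm{Supp}(P)$; Theorem 3.13 applied with $N = M\langle U\rangle$ then shows that $\eta_M$ restricts to an $\mathcal{A}_{1_G}$-isomorphism on each primary piece, hence is an isomorphism.

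The main obstacle is the identification $\mathrm{End}_{\mathcal{A}}(P) = \mathcal{E}(U)$, i.e. that every ungraded $\mathcal{A}$-linear endomorphism of $P$ automatically respects the $G$-grading. This requires Theorem 3.12 together with Lemma 3.8: any nonzero component of such an endomorphism, viewed as a map $P \to P^\sigma$ of graded modules, must be a graded isomorphism with $\sigma \in G\{U\}$, and therefore lies in $\mathcal{E}(U)_\sigma$. Once this key computation is settled, the standard Morita template closes out the equivalence.
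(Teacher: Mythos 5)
The paper does not actually prove this statement: as announced at the start of Section 3, it is quoted from Dade ([Da], Theorem 10.5) without proof, so there is no internal argument to compare yours with and I can only judge the sketch on its own terms. Its Morita-style outline is reasonable, and the identification $\mathrm{End}_{\mathcal{A}}(U\overline{\otimes}\mathcal{A})=\mathcal{E}(U)$ you single out does hold for finite $G$ --- though for a more elementary reason than you give: because the grading group is finite, any $\mathcal{A}$-endomorphism of a $G$-graded module is the finite sum of its graded components, each of which is again $\mathcal{A}$-linear; Theorem 3.11 (the crossed-product statement --- your references are shifted by one, since the $\lambda_M$ result is Theorem 3.12 and the statement being proven is 3.13) only describes the nonzero components afterwards, it does not produce the decomposition.

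The two steps that actually carry the theorem, however, are asserted rather than proven. First, your reduction of the counit to $N=\mathcal{E}(U)$ needs the composite $N\mapsto \mathrm{Hom}_{\mathcal{A}}\bigl(U\overline{\otimes}\mathcal{A},\,N\otimes_{\mathcal{E}(U)}(U\overline{\otimes}\mathcal{A})\bigr)$ to be right exact; but $\mathrm{Hom}_{\mathcal{A}}(U\overline{\otimes}\mathcal{A},\cdot)$ is only left exact a priori, and $U\overline{\otimes}\mathcal{A}$ is in general not projective as an $\mathcal{A}$-module, so what you are implicitly using is its projectivity relative to the subcategory $\textbf{Mod}(\mathcal{A}\mid U)$ --- a genuinely nontrivial fact that is essentially part of the theorem and cannot be obtained from the tensor-hom adjunction alone. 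Second, the unit argument presupposes both that every $M$ lying over $U$ satisfies $M=M\{U\}\mathcal{A}$ and that an $\mathcal{A}_{1_G}$-isomorphism on the $U$-primary component (which Theorem 3.12 does give you) propagates to an isomorphism on all of $M$; controlling the remaining primary components $M\{U^{\tau}\}$ through the shifts $M\{U\}\mathcal{A}_{\tau}$, including the degenerate directions where $U^{\tau}=0$, is precisely the technical core of Dade's Sections 7--10 and appears nowhere in your sketch. As it stands the proposal is a plausible roadmap, but filling in these two verifications amounts to reconstructing Dade's proof rather than shortcutting it.
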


\begin{theorem}[see \cite{Cliffordtheory} Theorem 12.10] 
Let $M$ be a simple \A -module. 
If the group $G$ is  finite,  then there is some \A -monomorphism of $M$ into 
simple $G$-graded \A -module $N$. 
Hence $M$ lies over a some simple $\mathcal{A}_{1_{G}}$-module $U$.
\end{theorem}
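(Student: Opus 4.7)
The plan is to construct explicitly a simple $G$-graded $\mathcal{A}$-module $N$ containing $M$ and then deduce the ``lies over'' conclusion by applying the structural results of this section to $N$. The key construction is a graded ``spread'' of $M$: set $\tilde{M} = M \otimes_{\mathbb{C}} \mathbb{C}[G]$ as a vector space, graded by $\tilde{M}_g = M \otimes [g]$, with right $\mathcal{A}$-action defined on homogeneous elements by $(m \otimes [g]) \cdot a_\sigma = m a_\sigma \otimes [g\sigma]$ for $a_\sigma \in \mathcal{A}_\sigma$. The relation $\mathcal{A}_\sigma \mathcal{A}_\tau \subseteq \mathcal{A}_{\sigma\tau}$ makes this a well-defined $G$-graded right $\mathcal{A}$-module, and the diagonal map $\iota \colon M \to \tilde{M}$, $\iota(m) = \sum_{g \in G} m \otimes [g]$, is $\mathcal{A}$-linear and injective: since right multiplication by $\sigma$ permutes $G$, one has $\iota(m) \cdot a_\sigma = \sum_g (ma_\sigma) \otimes [g\sigma] = \iota(ma_\sigma)$.

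Next I cut $\tilde{M}$ down to a simple graded quotient. By finite-dimensionality I can choose a graded $\mathcal{A}$-submodule $P \subseteq \tilde{M}$ that is maximal among those with $P \cap \iota(M) = 0$, and set $N = \tilde{M}/P$. The composition $M \xrightarrow{\iota} \tilde{M} \twoheadrightarrow N$ is then the required $\mathcal{A}$-monomorphism. To see that $N$ is simple as a graded module, let $Q \subseteq N$ be any nonzero graded submodule and let $Q' \subseteq \tilde{M}$ be its preimage; then $Q'$ is graded and strictly contains $P$, so maximality of $P$ yields $Q' \cap \iota(M) \neq 0$, and simplicity of $M$ forces $\iota(M) \subseteq Q'$. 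But $Q'$ being graded means each homogeneous component $m \otimes [g]$ of $\iota(m) = \sum_g m \otimes [g]$ lies in $Q'$, so $Q' = \tilde{M}$ and $Q = N$.

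For the final assertion, since $G$ is finite and $N \neq 0$ we can pick some $\tau$ with $N_\tau \neq 0$, and Corollary 3.7 makes $U := N_\tau$ a simple $\mathcal{A}_{1_G}$-module. The shifted module $N^\tau$ is still simple graded with $(N^\tau)_{1_G} = U$, so Lemma 3.6 gives $N^\tau \cong U \overline{\otimes} \mathcal{A}$ as graded $\mathcal{A}$-modules; unpacking the $\sigma$-components identifies each nonzero $N_\sigma$ with the $G$-conjugate $U^{\tau^{-1}\sigma}$ of $U$. Consequently $N|_{\mathcal{A}_{1_G}}$ is semisimple with every simple summand $G$-conjugate to $U$, and the $\mathcal{A}$-submodule $M \subseteq N$ inherits these properties. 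I expect the main obstacle to be completing the ``lies over'' verification in the strict sense of the definition: beyond semisimplicity and conjugacy, one must check that $M\{V\}\mathcal{A}_\rho = 0$ whenever $V^\rho = 0$, which requires tracing the partial $G$-action on simple $\mathcal{A}_{1_G}$-modules through the description $N_\sigma = U^{\tau^{-1}\sigma}$, confirming the condition for $N = U \overline{\otimes} \mathcal{A}$, and then pulling back along the inclusion $M \hookrightarrow N$.
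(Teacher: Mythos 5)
The paper states this theorem without proof (Section 3 is quoted from Dade), so there is no in-paper argument to compare with; judged on its own, your construction is the right one and its first half is essentially complete. The graded spread $\tilde M=M\otimes_{\C}\C[G]$ with $\tilde M_g=M\otimes[g]$ and $(m\otimes[g])\cdot a_\sigma=ma_\sigma\otimes[g\sigma]$, the diagonal embedding $\iota$, and the quotient $N=\tilde M/P$ by a graded submodule $P$ maximal with $P\cap\iota(M)=0$ do produce an $\mathcal{A}$-monomorphism of $M$ into a graded-simple module: your argument that the preimage $Q'$ of a nonzero graded submodule is graded, must meet $\iota(M)$, hence contains every homogeneous component $m\otimes[g]$, is sound. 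Two small repairs: in the generality of Section 3, $\mathcal{A}$ is merely a ring, so ``finite-dimensionality'' is not available --- get the maximal $P$ from Zorn's lemma (a union of a chain of graded submodules avoiding $\iota(M)$ still avoids it); and you should remark that $1\in\mathcal{A}_{1_G}$, so that $\tilde M$ is a unital $\mathcal{A}$-module.

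The genuine gap is the one you flag yourself and then leave open: the third clause of ``lies over,'' namely $M\{V\}\mathcal{A}_\rho=0$ whenever $V^\rho=0$, is never verified, and without it the last sentence of the theorem is not proved. It does close quickly along the route you sketch. Since $N=\bigoplus_{\sigma}N_\sigma$ is $\mathcal{A}_{1_G}$-semisimple with each nonzero $N_\sigma$ simple (Corollary 3.7), the primary component is $N\{V\}=\bigoplus_{N_\sigma\cong V}N_\sigma$. For such a $\sigma$, the shift $N^\sigma$ is again graded-simple with $(N^\sigma)_{1_G}=N_\sigma\cong V\neq 0$, so Lemma 3.6 gives $N^\sigma\cong V\overline{\otimes}\mathcal{A}$ as graded $\mathcal{A}$-modules; hence $N_{\sigma\rho}=(N^\sigma)_\rho\cong(V\overline{\otimes}\mathcal{A})_\rho=V^\rho=0$, and since $N_\sigma\mathcal{A}_\rho\subseteq N_{\sigma\rho}$ this forces $N\{V\}\mathcal{A}_\rho=0$. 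Finally $M\{V\}\subseteq N\{V\}$ (every $\mathcal{A}_{1_G}$-submodule of $M$ isomorphic to $V$ is one of $N$), so $M\{V\}\mathcal{A}_\rho=0$ as required; conditions (i) and (ii) you had already established from $N_\sigma\cong U^{\tau^{-1}\sigma}$. With this paragraph inserted your proof is complete; as written, the ``hence $M$ lies over some simple $\mathcal{A}_{1_G}$-module $U$'' rests on an unproved step.
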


\section{\textbf{Twisted orthogonality relations for a $\Z/m\Z$-graded Frobenius $\star$-algebra} }

In this section we consider a $\Z/m\Z$-graded Frobenius $\star$-algebra (\A,\la,$\star$).
\begin{definition}
 A $\Z/m\Z$-graded Frobebius $\star$-algebra \A\ is  a $\Z/m\Z$-graded finite dimensional associative unital $\C$-algebra equipped with a map $\star : \mathcal{A}\rightarrow\mathcal{A}$ and a class functional $\lambda_{0} : \mathcal{A}_{0}\rightarrow \mathcal{C}$ which is extended linearly to \A\ by zero outside of $\mathcal{A}_{0}$ we denote this extension by \la\  , such that the following are holds:
 
 $\bullet\ (a+b)^{\star} = a^{\star}+b^{\star}\ , \forall\ a, b \ \in \mathcal{A}.$

$\bullet\ (a^{\star})^{\star} = a,\ \forall\ a\ \in \mathcal{A}.$

 $
\bullet\ (ab)^{\star}=b^{\star}a^{\star} \ , \forall \ a,b\in\mathcal{A}.$ 

 $\bullet\ (\alpha a)^{\star}=\bar{\alpha} a^{\star} \,
     ,\forall \ a \in\mathcal{A} 
     ,\forall\ \alpha\in\mathbb{C}.
$

$\bullet(\mathcal{A}_{r})^{\star}\subseteq \mathcal{A}_{-r}\, \ \forall\ r \ \in\ \Z/m\Z.$

$\bullet$  $\langle a,b\rangle= \lambda(ab^{\star})$ is a positive definite hermitian form on \A.
\end{definition}

\begin{remark}
If \A\ is a $\Z/m\Z$-graded Frobenius $\star$-algebra then (\A$_{0}$, \la$_{0}, \star)$ is  a Frobenius $\star$-algebra and  (\A, \la, $\star$) is also a Frobenius $\star$-algebra.

\end{remark}

\begin{lemma}
Let \A = $\oplus_{r\in \Z/m\Z}\mathcal{A}_{r}$ be a $\Z/m\Z$-graded Frobenius $\star$-algebra as above.

\noindent
$(i)$ Let $M = \oplus_{r\in \Z/m\Z} M_{r}$, with each $M_{r} \subseteq \mathcal{A}_{r}$ be a $\Z/m\Z$-graded left \A -submodule of  \A. Suppose that $M_{0} = 0$, 
i.e. $M$ is null in the sense of \cite{Cliffordtheory} section 5. Then $M_{r} = 0$ for all $r\in \Z/m\Z$. In other words, the null socle of the $\Z/m\Z$-graded left \A -module \A\ is zero.

\noindent
$(ii)$ Let $M$ be any (left) \A $_{0}$-module. Then the null socle of the $\Z/m\Z$-graded \A -module \A$ \otimes_{\mathcal{A}_{0}} M$ is zero. In other words, the \A -module induced by the \A $_{0}$-module $M$ in the sense  section 3.2  is equal to \A $\otimes _{\mathcal{A}_{0}} M$.
\end{lemma}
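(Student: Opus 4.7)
For part (i), the natural idea is to exploit the $\star$-operation together with positive-definiteness of the Hermitian form. Given $x \in M_r$, the axiom $(\mathcal{A}_r)^{\star} \subseteq \mathcal{A}_{-r}$ ensures $x^\star \in \mathcal{A}_{-r}$, so $x^{\star}x$ is homogeneous of degree $0$ and lies in $M$ (since $M$ is a left $\mathcal{A}$-submodule of $\mathcal{A}$). Hence $x^\star x \in M_0 = 0$, and combining the class-functional property of $\lambda$ (Remark 4.2) with positive-definiteness yields
\[
\langle x,x\rangle \;=\; \lambda(xx^\star) \;=\; \lambda(x^\star x) \;=\; 0,
\]
forcing $x = 0$. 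This disposes of (i).

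For part (ii), the plan is to reduce to (i) via projectivity. By Remark 4.2, $(\mathcal{A}_0,\lambda_0,\star)$ is itself a Frobenius $\star$-algebra, so Theorem 2.4 makes $\mathcal{A}_0$ semisimple; consequently every left $\mathcal{A}_0$-module is projective, and in particular the given $M$ is a direct summand of some free $\mathcal{A}_0$-module $\mathcal{A}_0^{(I)} = M \oplus M'$. Tensoring on the left with $\mathcal{A}$ then produces a direct-sum decomposition of $\Z/m\Z$-graded left $\mathcal{A}$-modules
\[
\mathcal{A}^{(I)} \;\cong\; \mathcal{A} \otimes_{\mathcal{A}_0} \mathcal{A}_0^{(I)} \;=\; \bigl(\mathcal{A} \otimes_{\mathcal{A}_0} M\bigr) \;\oplus\; \bigl(\mathcal{A} \otimes_{\mathcal{A}_0} M'\bigr).
\]

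By part (i) the null socle of $\mathcal{A}$ is zero, and hence so is the null socle of $\mathcal{A}^{(I)}$: any null graded submodule $N$ of a direct sum projects to null graded submodules of each factor and sits inside their direct sum, so the null socle commutes with (arbitrary) direct sums. A null graded submodule of the summand $\mathcal{A}\otimes_{\mathcal{A}_0} M$ is automatically a null graded submodule of the ambient $\mathcal{A}^{(I)}$, so it must vanish, establishing (ii).

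I do not expect a serious obstacle here. Part (i) is an immediate consequence of the interplay between $\star$, the grading, and positive-definiteness of $\langle\cdot,\cdot\rangle$, and part (ii) is a formal bookkeeping argument once semisimplicity of $\mathcal{A}_0$ is invoked. The one subtlety worth spelling out is the behaviour of the null socle under passage to direct summands, which comes down to the projection observation above.
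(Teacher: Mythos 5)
Your proof is correct and takes essentially the same route as the paper: part (i) is the same interplay of the $\star$-map, the grading axiom $(\mathcal{A}_r)^\star \subseteq \mathcal{A}_{-r}$, the class-functional property of $\lambda$, and positive-definiteness; part (ii) reduces to part (i) via semisimplicity of $\mathcal{A}_0$, which the paper phrases as ``$M$ is a direct sum of simples, each a summand of the regular representation,'' whereas you phrase it as projectivity of $M$ over $\mathcal{A}_0$. Your explicit remark that the null socle respects direct sums is the small point the paper leaves implicit, and it is correct.
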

 \begin{proof}
 Let $r \in\Z/m\Z$. Since $M\subseteq\mathcal{A}$ is a graded submodule, we  
 have $m \in M_{r}$ then $m^{\star}\in \mathcal{A}_{-r}$, $m^{\star}m\in M_{0}=0 $, therefore $\langle m, m^{\star} \rangle =  \lambda_{0}(mm^{\star}) = 0$. 
 Hence we must have $M_{r}=0$, (since $\langle\cdot,\cdot\rangle$ is positive definite) this proves part $(i)$. 
 
 As $\mathcal{A}_{0}$ is a semisimple algebra it is enough to prove part $(ii)$ for $M\in \mbox{Sim}(\mathcal{A}_{0})$.
 Then we know that $M$ occurs as a direct summand in regular representation of $\mathcal{A}_{0}$. Then the statement follows from $(i)$.

\end{proof}  
 
\begin{definition} {(Centralizer of $\mathcal{A}_{0}$ in $ \mathcal{A}$)} The centralizer of $\mathcal{A}_{0}$ in \A\ is denoted by $Z_{\mathcal{A}_{0}}(\mathcal{A})$ and defined as $Z_{\mathcal{A}_{0}}(\mathcal{A}) = \{a \in \mathcal{A} : ab=ba  \ \forall\  b \in \mathcal{A}_{0}\}$. 
 \end{definition}
 \begin{remark}
 $Z_{\mathcal{A}_{0}}(\mathcal{A}) = \oplus_{r \in \Z/m\Z} Z_{\mathcal{A}_{0}}(\mathcal{A}_{r})$
 
 \end{remark}

Now we will give a structure of simple $Z_{\mathcal{B}}(\mathcal{A})$-module for any semisimple algebra \A\ and its semisimple subalgebra $\mathcal{B}$. 
 
\begin{lemma}
Let $\mathcal{A}$ be a semisimple $C$-algebra and $\mathcal{B}$ be its subalgebra.
We have an isomorphism $Z_{\mathcal{B}}(\mathcal{A}) = \oplus_{
U,V} End(Hom_{B}(U, V ))$, where the sum
is taken over all pairs $(U, V ) \in$ Sim($\mathcal{B})$ $\times$ Sim(\A) satisfying Hom$_{B}(U, V )  \neq \{0\}$.
\end{lemma}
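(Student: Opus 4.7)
The plan is to reduce to the classical double-centralizer computation on each Artin--Wedderburn factor of $\mathcal{A}$. Since $\mathcal{A}$ is a finite-dimensional semisimple $\mathbb{C}$-algebra, Artin--Wedderburn gives an isomorphism $\mathcal{A} \cong \bigoplus_{V \in \mathrm{Sim}(\mathcal{A})} \mathrm{End}_{\mathbb{C}}(V)$, realized via the action maps $a \mapsto (a_V)_V$ where $a_V$ denotes the action of $a$ on $V$. The subalgebra $\mathcal{B}$ maps into each factor via the restriction $\rho_V : \mathcal{B} \to \mathrm{End}_{\mathbb{C}}(V)$, so the centralizer decomposes factor-by-factor as $Z_{\mathcal{B}}(\mathcal{A}) \cong \bigoplus_V Z_{\rho_V(\mathcal{B})}(\mathrm{End}_{\mathbb{C}}(V))$. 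The centralizer of the image of a subalgebra acting on a vector space $V$ is by definition the endomorphism ring of $V$ as a module for that subalgebra, so this simplifies to $\bigoplus_V \mathrm{End}_{\mathcal{B}}(V)$.

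Next I would decompose each simple $\mathcal{A}$-module $V$ when restricted to $\mathcal{B}$. Assuming $\mathcal{B}$ is semisimple (which is automatic in the intended application, where $\mathcal{B} = \mathcal{A}_0$ is a Frobenius $\star$-algebra by Remark 4.2 and hence semisimple by Theorem 2.4), $V|_{\mathcal{B}}$ admits the canonical isotypic decomposition $V \cong \bigoplus_{U \in \mathrm{Sim}(\mathcal{B})} U \otimes_{\mathbb{C}} \mathrm{Hom}_{\mathcal{B}}(U,V)$, with $\mathcal{B}$ acting only on the first tensor factor. Schur's lemma over $\mathbb{C}$ then yields $\mathrm{End}_{\mathcal{B}}(V) \cong \bigoplus_{U} \mathrm{End}_{\mathbb{C}}(\mathrm{Hom}_{\mathcal{B}}(U,V))$, since there are no nonzero $\mathcal{B}$-homomorphisms between distinct isotypic components and $\mathrm{End}_{\mathcal{B}}(U) = \mathbb{C}$ for each simple $U$. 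Summands with $\mathrm{Hom}_{\mathcal{B}}(U,V) = 0$ contribute nothing, so assembling the two reductions produces precisely the direct sum indexed over pairs $(U,V)$ with $\mathrm{Hom}_{\mathcal{B}}(U,V) \neq 0$, as claimed.

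The only subtle point I would expect is bookkeeping: one must verify that the vector-space identifications produced by Artin--Wedderburn and Schur actually intertwine the ring structure on $Z_{\mathcal{B}}(\mathcal{A})$ with componentwise composition in $\bigoplus_{U,V} \mathrm{End}_{\mathbb{C}}(\mathrm{Hom}_{\mathcal{B}}(U,V))$. This follows from naturality of the canonical isotypic decomposition, since composition of a $\mathcal{B}$-endomorphism $\phi$ of $V$ corresponds to composition of the induced endomorphism on each multiplicity space $\mathrm{Hom}_{\mathcal{B}}(U,V)$, but it is worth spelling out. Everything else is a direct application of Artin--Wedderburn followed by Schur's lemma.
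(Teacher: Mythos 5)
Your proposal is correct and follows essentially the same route as the paper: identify $\mathcal{A}$ with $\bigoplus_{V\in \mathrm{Sim}(\mathcal{A})}\mathrm{End}(V)$ via Artin--Wedderburn, observe $Z_{\mathcal{B}}(\mathcal{A})=\bigoplus_{V}\mathrm{End}_{\mathcal{B}}(V)$, and then decompose each $\mathrm{End}_{\mathcal{B}}(V)$ into $\bigoplus_{U}\mathrm{End}(\mathrm{Hom}_{\mathcal{B}}(U,V))$ using the isotypic decomposition and Schur's lemma. You merely spell out (via the decomposition $V\cong \bigoplus_U U\otimes_{\mathbb{C}}\mathrm{Hom}_{\mathcal{B}}(U,V)$, and correctly noting the semisimplicity hypothesis on $\mathcal{B}$ that the paper states just before the lemma) the step the paper quotes as known, so there is no substantive difference.
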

 In other words, the algebra $Z_{\mathcal{B}}(\mathcal{A})$ is semisimple and the irreducible $Z_{\mathcal{B}}(\mathcal{A})$-modules are
precisely the nonzero multiplicity spaces Hom$_{B}(U, V )$.
\begin{proof}
Since \A \ is semisimple, it can be identified with $\oplus_{V \in \mbox{Sim}(\mathcal{A})}\mbox{End}(V )$, it follows from structure theorem for a semisimple algebra.
 In this realization, we have $Z_{B}(\mathcal{A}) = \oplus_{V \in \mbox{Sim}(\mathcal{A})}\mbox{End}_{B}(V )$. 
 But we know that 
End$_{B}(V ) = \oplus_{U \in \mbox{Sim}(B)}\mbox{End}(Hom_{B}(U, V ))$,
where the summation is taken over all $U \in \mbox{Sim}(B)$ such that Hom$_{B}(U, V ) \neq {0}$.
Hence the proof.
\end{proof} 
 
 \begin{corollary}
 Every simple $Z_{\mathcal{A}_{0}}(\mathcal{A}_{0})$-module is isomorphic to Hom$_{\mathcal{A}_{0}}(U, U)$ for some $U \in $ Sim$(\mathcal{A}_{0})$.
 \end{corollary}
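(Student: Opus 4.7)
The plan is to apply Lemma 4.6 in the special case in which both the ambient semisimple algebra and its subalgebra are taken to be $\mathcal{A}_0$. For this one needs $\mathcal{A}_0$ itself to be semisimple: by Remark 4.2 the triple $(\mathcal{A}_0,\lambda_0,\star)$ is a Frobenius $\star$-algebra, and Theorem 2.4 (applied to $\mathcal{A}_0$) then provides the required semisimplicity.

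First, I would substitute $\mathcal{A} = \mathcal{B} = \mathcal{A}_0$ into Lemma 4.6 to obtain an isomorphism
$$Z_{\mathcal{A}_0}(\mathcal{A}_0) \;\cong\; \bigoplus_{(U,V)} \mathrm{End}\bigl(\mathrm{Hom}_{\mathcal{A}_0}(U,V)\bigr),$$
where the sum ranges over those pairs $(U,V) \in \mathrm{Sim}(\mathcal{A}_0) \times \mathrm{Sim}(\mathcal{A}_0)$ for which $\mathrm{Hom}_{\mathcal{A}_0}(U,V) \neq 0$.

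Next, I would invoke Schur's lemma over the algebraically closed field $\mathbb{C}$: for simple $\mathcal{A}_0$-modules $U,V$ one has $\mathrm{Hom}_{\mathcal{A}_0}(U,V) \neq 0$ if and only if $U \cong V$, and in that case the hom space is one-dimensional. Consequently only the diagonal pairs $(U,U)$ contribute and each summand $\mathrm{End}(\mathrm{Hom}_{\mathcal{A}_0}(U,U))$ reduces to a copy of $\mathbb{C}$, giving
$$Z_{\mathcal{A}_0}(\mathcal{A}_0) \;\cong\; \bigoplus_{U \in \mathrm{Sim}(\mathcal{A}_0)} \mathrm{End}\bigl(\mathrm{Hom}_{\mathcal{A}_0}(U,U)\bigr).$$

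Finally, the simple modules of a finite direct product of matrix algebras are precisely the simple modules of the factors, one per factor. Reading these off yields exactly the spaces $\mathrm{Hom}_{\mathcal{A}_0}(U,U)$ as $U$ ranges over $\mathrm{Sim}(\mathcal{A}_0)$, which is the desired classification. There is no real obstacle here: once Lemma 4.6 is in hand, the corollary is a formal consequence of Schur's lemma, and the only point requiring verification is the semisimplicity of $\mathcal{A}_0$, which is handled by Remark 4.2 together with Theorem 2.4.
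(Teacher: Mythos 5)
Your proposal is correct and matches the paper's (implicit) argument: the corollary is exactly Lemma 4.6 specialized to $\mathcal{B}=\mathcal{A}=\mathcal{A}_0$ (noting $Z_{\mathcal{A}_0}(\mathcal{A}_0)=Z(\mathcal{A}_0)$), with Schur's lemma over $\C$ discarding the off-diagonal pairs, and your verification of the semisimplicity hypothesis via Remark 4.2 and Theorem 2.4 is exactly the right supporting point.
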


To illustrate the general phenomenon to be discussed in this section, let us consider the following example.

Let $G$\ be a finite group, $\sigma\in$ Aut(G) and order of $\sigma$ is $m$ then we get an action of $\Z/m\Z=\langle\sigma\rangle \ \mbox{on} \ G$, this action induces an action of $\Z/m\Z$\ on Irr($G$).
Which is given by,$$\mbox{for}\ \rho\in \mbox{Irr}(G), \ \rho^{\sigma}=\rho \circ \sigma$$ and so on.

Let $\rho\in \mbox{Irr}(G)^{\Z/m\Z}$ \ then $ \rho\cong\rho^{\tau},$ \ $\forall \ \tau\in \Z/m\Z$. 
Suppose $\phi$\ is an isomorphism between $\rho$ and $\rho^{\sigma}$ then,
\begin{gather*}
\rho \circ \phi=\phi \circ \rho^{\sigma}  
\Rightarrow \rho(g) \circ\phi =\phi \circ\rho^{\sigma}(g),\ \forall\ g\in G \\
\Rightarrow\rho(\sigma(g))\circ\phi =\phi \circ\rho^{\sigma}(\sigma(g)),\ \forall\ g\in G \
\Rightarrow\rho^{\sigma} \circ\phi =\phi \circ\rho^{\sigma^{2}} \\
\mbox{but}\ \rho^{\sigma}=\phi^{-1} \circ\rho \circ\phi
\Rightarrow \phi^{-1} \circ\rho \circ\phi \circ \phi=\phi \circ \rho^{\sigma^{2}}
\Rightarrow\rho \circ\phi^{2}=\phi^{2} \circ \rho^{\sigma^{2}}.
\end{gather*}
Therefore $\phi^{2}$ is an isomorphism between $\rho \ \mbox{and} \ \rho^{\sigma^{2}}$.
Similarly we can show that $\phi^{r}$\ is an isomorphism between $\rho$ and $\rho^{\sigma^{r}}$.
Therefore for $r=m$  we get that,

$\rho\circ \phi^{m}=\phi^{m}\circ\rho^{\sigma^{m}}=\phi^{m}\circ\rho\ (\mbox{since}\ o(\sigma)=m)$
\ $\Rightarrow \phi^{m}=cI, \ \mbox{ for\ some}\ c\in \C$ (by Schur's lemma).
Replace $\phi$ by $c^{1/m}\phi$, then we can assume that $\phi^{m}=I.$

Suppose $\tilde{G} = G\rtimes\Z/m\Z = G \rtimes \langle\sigma\rangle $ then we can extend $\rho\in $ Irr(\G)$^{\Z/m\Z}$ \ to a representation $\tilde{\rho}$\ of $\tilde{G}$.
This can be done in the following way, 
\begin{gather*}
\tilde{\rho}(g)=\rho(g),\ \mbox{for} \, g\in G \\   \tilde{\rho}(\sigma)=\phi,\ \mbox{for} \ \sigma\\  
 \tilde{\rho}(g,\sigma^{r})=\rho(g)\circ\phi^{r} ,\ \mbox{for}\ (g,\sigma^{r})\in \tilde{G}.
\end{gather*}
it is easy to show that $\tilde{\rho}\in\mbox{Irr}( \tilde{G}).$
This extension has $m$ choices since $\phi$ has $m$ choices and their characters are differ by a $m^{th}$\ root of unity.

Let $\rho_{1},  \rho_{2 }\in$ Irr(\G)$^{\Z/m\Z}$ \ then 
we get $\tilde{\rho_{1}},\tilde{\rho_{2}}\in\mbox{Irr}(\tilde{G})$,
denote $\chi_{\rho_{i}}$= character of
$\rho_{i}$, $\chi_{\tilde{\rho_{i}}}=\mbox{character\ of}\ \tilde{\rho_{i}}$.
There are well known orthogonality relations for the irreducible characters of a finite group: 
\begin{equation*}
\langle\chi_{\rho_{1}},\chi_{\rho_{2}}\rangle=\frac{1}{\abs{G}}\sum_{g \in G}\chi_{\rho_{1}(g)}\overline{\chi_{\rho_{2}}(g)}=
\begin{cases}
0 & \mbox{if}\ \rho_{1}\ncong\rho_{2};  
 \\
1 & \mbox{if}\ \rho_{1 }\cong \rho_{2}. 
\end{cases}
\end{equation*}
and
\begin{equation*}
\langle\chi_{\tilde{\rho_{1}}},\chi_{\tilde{\rho_{2}}}\rangle=\frac{1}{\abs{\tilde{G}}}\sum_{g \in G}\chi_{\tilde{\rho_{1}}(g)}\overline{\chi_{\tilde{\rho_{2}}}(g)}=
\begin{cases}
0 & \mbox{if}\ \tilde{\rho_{1}}\ncong\tilde{\rho_{2}};  
 \\
1 & \mbox{if}\ \tilde{ \rho_{1 }}\cong \tilde{\rho_{2}}. 
\end{cases}
\end{equation*}
We know that for an extension of $\rho\in$ Irr(\G)$^{\Z/m\Z}$ to $\tilde{G}$ has $m$-choices and their characters are differ by $m^{th}$ root of unity. Denote all extension of $\rho$ by 
$\tilde{\rho},\ \tilde{\rho_{\omega}},\ \tilde{\rho_{\omega^{2}}},\ \cdots,\ \tilde{\rho_{\omega^{m-1}}}$, where $\omega$
is a primitive $m^{th}$ root of unity.
Note that  $\tilde{\rho_{\omega^{r}}}$\ and $\tilde{\rho_{\omega^{k}}}$\ are not isomorphic, for 0 $\leq r < k\leq m-1$ (since their characters are distinct).

\begin{definition}
 Let $\rho\in \mbox{Irr}(G)^{\Z/m\Z}$, the twisted character $\tilde{\chi}_{\rho}$  corresponding to $\rho$ is defined as, 
 $$ \tilde{\chi}_{\rho}(g)\coloneqq\chi_{\tilde{\rho}}(g,\sigma).$$ 
\end{definition}

By using the orthogonality relations for \G\ and $\tilde{G}$ we get, for $\rho_{1},\rho_{2}\in$ Irr($G$)$^{\Z/m\Z}$

\begin{equation*}
\langle\tilde{\chi}_{\rho_{1}},\tilde{\chi}_{\rho_{2}}\rangle=
 \begin{cases} 1\ \ \mbox{if}\ \rho_{1}\cong\rho_{2}; \\
 0 \ \ \mbox{if}\ \rho_{1}\ncong\rho_{2}.
  \end{cases}
  \end{equation*}\\
The above relations are called as $\textbf{Twisted orthogonality relations for irred-}$\\ $\textbf{ucible characters of G}$.
The proof is similar to the  general case that we will see in the next part.

Now we put the above discussion in the language of graded algebras.
Let \A\ = $\mathbb{C} [\tilde{G}]$ then \A$_{0}=\mathbb{C} [G]$.
Define  $\star:\mathcal{A}\rightarrow\mathcal{A}$ as, $\tilde{g}^{\star} = \tilde{g}^{-1}$ and extending by conjugate linearity to $\mathcal{A}$ and define $\lambda_{0}: \mathcal{A_{\circ}} \rightarrow \mathbb{C}$ as, $\lambda_{0}(e)=1 \ \mbox{and}
\ \lambda_{0}(g)=0\ \mbox{if}\ g \neq e$\ and extend linearly to \A$_{0}$. 
Now extend $\lambda_{0}$ to \A\ by zero outside of \A$_{0}$.
Then the following form on \A\ is a positive definite hermition form on \A.
$$\langle x,y\rangle=\lambda(xy^{\star})
$$
Therefore \A\ becomes a $\Z/m\Z$-graded Frobenius $\star$-algebra.
We know that irreducible representations of G and irreducible representations of $\mathbb{C}[G]\ $\ are same. 
Therefore if $M$ is a simple $\mathcal{A}_{o}$-module
and $M\in \mbox{Irr}(\mathcal{A}_{o})^{\Z/m\Z}$\ then we can make $M$ as a simple \A -module. 
Let $\chi_{M}$ denotes the character of $M$  as an $\mathcal{A}_{0}$-module, $\chi_{\tilde{M}}$ denotes the character of $M$ as an \A  -module  and the restriction of  $\chi_{\tilde{M}}$ to  $\mathcal{A}_{1}$ is denoted $\tilde{\chi}_{M}$. 
It follows from the above discussion that for $M,N$ \ simple \A -modules,
$$\langle\tilde{\chi}_{M},\tilde{\chi}_{N}\rangle=
\begin{cases}
1\ \ \mbox{if}\ M\cong N ;\\
0\ \ \mbox{if}\ M\ncong N.
\end{cases}
$$.

$\mathbf{General \ Case}$\ Suppose \A\ is a $\Z/m\Z$-graded Frobenius $\star$-algebra.

If $M$\ is a simple \A$_{o}$-module then $\mathcal{F}(M)=$ End$_{\mathcal{A}_{\circ}}(M)=\C$,
and if $M\in$ $\mbox{Sim}(\mathcal{A}_{o})^{\Z/m\Z}$ then 
$\mathcal{E}(M)=\oplus_{\tau\in\Z/m\Z}\C$.
By theorem (3.11), $\mathcal{E}(M)$ is a crossed product of $\Z/m\Z$ over $\C$\ (which is isomorphic to $\C[\Z/m\Z]$). 
Therefore upto isomorphism there are $m$ simple modules over $\mathcal{E}(M)$, all are 1-dimensional and their characters are differ by $m^{th}$ roots of unity. We list all these simple modules by $\C,\ \C_{\omega},\ \cdots,\ \C_{\omega^{m-1}}.$

By using results from section 3 we get $\C\otimes_{\C} M \cong\C\otimes_{\mathcal{F}(M)} M \cong \C\otimes_{\mathcal{E}(M)}(M\overline{\otimes}\mathcal{A})$ as \A$_{\circ}$-modules and $\C\otimes_{\mathcal{F}(M)} M \cong M$ as \A$_{\circ}$-modules.
In this way we make $M$ as a simple \A -module and the above isomorphism has $m$-choices.

We denote all possible extensions of  $M$ to an \A-module by
 $\tilde{M},\ \tilde{M}_{\omega},\ \cdots,\\ \tilde{M}_{\omega^{m-1}}$ and their characters as \A -modules by 
 $\chi_{\tilde{M}},\ \chi_{\tilde{M}_{\omega}},\ \cdots,\ \chi_{\tilde{M}_{\omega^{m-1}}}$.
 
 Set $\alpha_{\tilde{M}}=\phi ^{-1}(\chi_{\tilde{M}}),
 \alpha_{\tilde{M_{\omega}}}=\phi^{-1}(\chi_{\tilde{M_{\omega}}}),\ \cdots,\ \alpha_{\tilde{M_{\omega^{m-1}}}}=\phi^{-1}(\chi_{\tilde{M_{\omega^{m-1}}}}).$ Where $\phi$ is an isomorphism between \A\ and \A$^{*}$ as in the definition of a Frobenius algebra.

Note that $\chi_{\tilde{M_{\omega^{r}}}}(a_{s})=\omega^{sr}\chi_{M}(a_{s}),\ \forall\ a_{s}\in \mathcal{A}_{s},  0\leq r,s\leq m-1$
and $\tilde{M_{\omega^{s}}}\ \mbox{and}\ \tilde{M_{\omega^{r}}}$\ are not isomorphic, whenever $0\leq r < s \leq m-1$, since their characters are distinct.
By theorem $\ref{orthoforstar}$\ we have the following orthogonality relations for irreducible characters of \A\ and for irreducible character of $\mathcal{A}_{o}$.

Now fix $M,N\in \mbox{Sim}(\mathcal{A}_{o})^{\Z/m\Z}$ then by theorem $\ref{orthoforstar}$,
we get that 
\begin{gather*}
\langle\alpha_{M},\alpha_{N}\rangle =0,\ \ \mbox{if}\ M\ncong N  \\
\langle\alpha_{M},\alpha_{M}\rangle=\abs{c_{M}}^{2} \langle e_{M},e_{M}\rangle
\end{gather*}

and 
\begin{gather*}
\langle\alpha_{\tilde{M}},\alpha_{\tilde{N}}\rangle=0,\ \ \ \mbox{if}\ \tilde{M}\ncong\tilde{N} \\
\langle\alpha_{\tilde{M}},\alpha_{\tilde{M}}\rangle=\abs{c_{M}}^{2} \langle e_{\tilde{M}},e_{\tilde{M}}\rangle.
\end{gather*}
As $\alpha_{\tilde{M}}\in\mathcal{A}$\ we can write 
$\alpha_{\tilde{M}}=\sum_{r=0}^{m-1}\alpha_{M}^{(r)}$,\ where $\alpha_{M}^{(r)}\in\mathcal{A}_{r}$ and
$\alpha_{\tilde{M_{\omega^{s}}}}=\sum_{r=0}^{m-1}\alpha_{M_{\omega}}^{(r)} =\sum_{r=1}^{n}\omega^{sr}\alpha_{M}^{(r)}$.

Therefore,
\begin{gather*}
\langle\alpha_{\tilde{M}},\alpha_{\tilde{M}} \rangle=\sum_{r=0}^{m-1}\langle\alpha_{M}^{(r)},\alpha_{M}^{(r)}\rangle=\abs{c_{M}}^{2} \langle e_{M},e_{M}\rangle \\
\langle\alpha_{\tilde{M}},\alpha_{\tilde{M_{\omega}}}\rangle=\sum_{r=0}^{m-1}\langle\alpha_{M}^{(r)},\alpha_{M_{\omega}}^{(r)}\rangle=\sum_{r=0}^{m-1}\omega^{-r}\langle\alpha_{M}^{(r)},\alpha_{M}^{(r)}\rangle=0\\
\langle\alpha_{\tilde{M}},\alpha_{\tilde{M_{\omega^{2}}}}\rangle=\sum_{r=0}^{m-1}\langle \alpha_{M}^{(r)},\alpha_{ M_{\omega^{2}}}^{(r)}\rangle=\sum_{r=0}^{m-1}\omega^{-2r}\langle\alpha_{M}^{(r)},\alpha_{M}^{(r)}\rangle=0\\
\vdots \\
\langle\alpha_{\tilde{M}},\alpha_{\omega^{(m-1)}\tilde{M}}\rangle=\sum_{r=0}^{m-1}\langle\alpha_{M}^{(r)},\alpha_{\omega^{(m-1)} M}^{(r)}\rangle=\sum_{r=0}^{m-1}\omega^{-(m-1)r}\langle\alpha_{M}^{(r)},\alpha_{M}^{(r)}\rangle=0
\end{gather*}
Put $x_{r}=\langle\alpha_{M}^{(r)},\alpha_{M}^{(r)}\rangle$, then we get a system of equations which is expressed in a matrix form as,\\
$\underbrace{\begin{bmatrix}
1&1&\cdots&1\\
1&\omega^{-1} &\cdots&\omega^{-(m-1)}\\
\vdots\\
1&\omega^{-(m-1)}&\cdots&\omega^{-(m-1)^{2}}
\end{bmatrix} 
}_{\mathbf{P}}
\begin{bmatrix}
x_0\\
x_1\\
\vdots\\
x_{m-1}
\end{bmatrix} 
=
\begin{bmatrix}
\abs{c_{M}}^{2} \langle e_{\tilde{M}},e_{\tilde{M}}\rangle  \\
0\\
\vdots\\
0
\end{bmatrix} $
 
 The matrix $\mathbf{P}$\ is a Vandermonde matrix  with det($\mathbf{P})\neq0$ i.e. $\mathbf{P}$ is invertible  and it is easy to compute first column of $\mathbf{P^{-1}}$ \ which is,
 $\begin{bmatrix}
\dfrac{1}{m}  \\
\dfrac{1}{m}\\
\vdots\\
\dfrac{1}{m}
\end{bmatrix}$.

Therefore the unique solution to the above system is, $$x_{r}=\dfrac{\abs{c_{M}}^{2} \langle e_{\tilde{M}},e_{\tilde{M}}\rangle}{m},\ \mbox{ for }\ 0\leq r \leq m-1 $$ 
\begin{equation}
\langle\alpha_{M}^{(r)},\alpha_{M}^{(r)}\rangle=\dfrac{\abs{c_{M}}^{2} \langle e_{\tilde{M}},e_{\tilde{M}}\rangle}{m}, \ \mbox{ for } \ 0\leq r \leq m-1.
\end{equation}

 Also for $M\neq N\in$ Sim(\A$_{\circ})^{\Z/m\Z}$,
\begin{gather*}
\langle\alpha_{\tilde{M}},\alpha_{\tilde{N}} \rangle=\sum_{r=o}^{m-1}\langle\alpha_{M}^{(r)},\alpha_{N}^{(r)}\rangle=0 \\
\langle\alpha_{\tilde{M}},\alpha_{\tilde{N_{\omega}}}\rangle=\sum_{r=o}^{m-1}\langle\alpha_{M}^{(r)},\alpha_{ N_{\omega}}^{(r)}\rangle=\sum_{r=o}^{m-1}\omega^{-1}\langle\alpha_{M}^{(r)},\alpha_{N}^{(r)}\rangle=0\\
\langle\alpha_{\tilde{M}},\alpha_{\tilde{N_{\omega^{2}}}}\rangle=\sum_{r=o}^{m-1}\langle \alpha_{M}^{(r)},\alpha_{ N_{\omega^{2}}}^{(r)}\rangle=\sum_{r=o}^{m-1}\omega^{-2r}\langle\alpha_{M}^{(r)},\alpha_{N}^{(r)}\rangle=0\\
\vdots \\
\langle\alpha_{\tilde{M}},\alpha_{\tilde{N_{\omega^{(m-1)}}}}\rangle=\sum_{r=o}^{m-1}\langle\alpha_{M}^{(r)},\alpha_{N_{\omega^{(m-1)}}}^{(r)}\rangle=\sum_{r=o}^{m-1}\omega^{-(m-1)r}\langle\alpha_{M}^{(r)},\alpha_{N}^{(r)}\rangle=0
\end{gather*}
 
 Put $x_{r}=\langle\alpha_{M}^{(r)},\alpha_{N}^{(r)}\rangle$, then we get a system of equations which is expressed in a matrix form as,\\
$\underbrace{\begin{bmatrix}
1&1&\cdots&1\\
1&\omega^{-1} &\cdots&\omega^{-(m-1)}\\
\vdots\\
1&\omega^{-(m-1)}&\cdots&\omega^{-(m-1)^{2}}
\end{bmatrix} 
}_{\mathbf{P}}
\begin{bmatrix}
x_o\\
x_1\\
\vdots\\
x_{m-1}
\end{bmatrix} 
=
\begin{bmatrix}
0\\
0\\
\vdots\\
0
\end{bmatrix} $
 
 The matrix $\mathbf{P}$\ is a Vandermonde matrix with det($\mathbf{P})\neq0$\ i.e. $\mathbf{P}$ is invertible. So the above system has a unique solution which is trivial.
 
Therefore the solution to above system is, 
$$x_{r}=0, \ \mbox{ for } \ 0\leq r \leq m-1.$$
\begin{equation}
\langle\alpha_{M}^{(r)},\alpha_{N}^{(r)}\rangle=0,\ \mbox{ for } \ 0\leq r \leq m-1.
\end{equation}
\begin{definition}
 Let $M\in \mbox{Sim}(\mathcal{A}_{0})^{\Z/m\Z}$, the  twisted character $\tilde{\chi}_{M}: \mathcal{A}_{1}\rightarrow \mathcal{C}$ corresponding to M is defined as $\tilde{\chi}_{M}(a) = \chi_{\tilde{M}}(a)\ $ $\forall\ a\ \in \mathcal{A}_{1}.$
\end{definition}

\begin{definition}
 A linear functional $\gamma$ on \A$_{0}$-bimodule $E$ is said to a twisted class functional if $\gamma(am) = \gamma(ma)\ \forall\ m\in E , a \in \mathcal{A}_{o}$ and $cf_{A_{0}}(E)$ denotes the set of all twisted class functionals on $E$.
\end{definition}
We have thus proved
\begin{theorem}
Suppose \A\ is a $\Z/m\Z$-graded Frobenius $\star$-algebra.
Let $M,N \in \mbox{Sim}$(\A$_{0}$)$^{\Z/m\Z}$ then,
\begin{gather*}
\langle\tilde{\chi}_{M},\tilde{\chi}_{M}\rangle=\dfrac{\abs{c_{M}}^{2} \langle e_{\tilde{M}},e_{\tilde{M}}\rangle}{m}\\
\langle\tilde{\chi}_{M},\tilde{\chi}_{N}\rangle=0, \ \ \mbox{for} \ M\ncong N.
\end{gather*}
Moreover, the set of twisted characters $\{\tilde{\chi_{M}} : M\ \in\ Sim(A)^{\Z/m\Z}\}$ forms an orthogonal basis of the space of twisted class functionals on \A$_{1}$. 
\end{theorem}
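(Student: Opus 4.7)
The plan is to deduce the theorem from the explicit computation already carried out just above the statement, supplemented by a dimension count for the basis assertion.

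First I would pin down the identification between the twisted characters and the graded pieces of the $\alpha_{\tilde{M}}$. Since $\phi(a)(b)=\lambda(ba)$ is supported on $ba$ lying in the $0$-component, we have $\phi(\mathcal{A}_r)\subseteq (\mathcal{A}_{-r})^{*}$. Consequently the restriction $\tilde{\chi}_M=\chi_{\tilde{M}}|_{\mathcal{A}_1}$ corresponds under $\phi^{-1}$ exactly to the component $\alpha_M^{(m-1)}\in \mathcal{A}_{m-1}$ of $\alpha_{\tilde{M}}$. The hermitian form on twisted class functionals is, by construction, the pullback of $\langle\cdot,\cdot\rangle$ along $\phi^{-1}$, so
\[
\langle \tilde{\chi}_M,\tilde{\chi}_N\rangle \;=\; \langle \alpha_M^{(m-1)},\alpha_N^{(m-1)}\rangle.
\]
Applying equations $(1)$ and $(2)$ with $r=m-1$ now yields the two displayed orthogonality relations in the theorem.

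For the basis statement, orthogonality together with the non-vanishing of $\langle\tilde{\chi}_M,\tilde{\chi}_M\rangle$ immediately gives linear independence of the family $\{\tilde{\chi}_M : M\in \mbox{Sim}(\mathcal{A}_0)^{\mathbb{Z}/m\mathbb{Z}}\}$. To show it spans $cf_{\mathcal{A}_0}(\mathcal{A}_1)$, I would compare dimensions. Transporting along $\phi^{-1}$, spanning is equivalent to the assertion that the $\alpha_M^{(m-1)}$ generate $Z_{\mathcal{A}_0}(\mathcal{A}_{m-1})=\{x\in \mathcal{A}_{m-1}: xa=ax\ \forall a\in \mathcal{A}_0\}$. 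On one hand, Lemma $4.5$ applied to the semisimple algebra $\mathcal{A}$ and its semisimple subalgebra $\mathcal{A}_0$, read off on the $(m-1)$-graded component, expresses $\dim Z_{\mathcal{A}_0}(\mathcal{A}_{m-1})$ as a sum over nonzero multiplicity spaces $\mbox{Hom}_{\mathcal{A}_0}(U,\tilde{M})$ with $\tilde{M}$ a simple $\mathcal{A}$-module. On the other hand, Theorems $3.11$ and $3.13$ identify the simple $\mathcal{A}$-modules sitting above a $\mathbb{Z}/m\mathbb{Z}$-fixed $M\in\mbox{Sim}(\mathcal{A}_0)$ with exactly the $m$ extensions $\tilde{M},\tilde{M}_\omega,\ldots,\tilde{M}_{\omega^{m-1}}$ constructed in the preceding discussion. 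Comparing the two counts forces $\dim cf_{\mathcal{A}_0}(\mathcal{A}_1)=|\mbox{Sim}(\mathcal{A}_0)^{\mathbb{Z}/m\mathbb{Z}}|$, which combined with orthogonality proves the basis claim.

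The main obstacle is this last dimension count. The Vandermonde calculation producing $(1)$ and $(2)$ is already in the text, and the identification $\tilde{\chi}_M\leftrightarrow \alpha_M^{(m-1)}$ is a purely formal grading check, so all the analytic work is essentially done. What requires genuine care is extracting the correct $(m-1)$-graded piece of $Z_{\mathcal{A}_0}(\mathcal{A})$ from Lemma $4.5$ and aligning it, via the Clifford-theoretic results of Section $3$, with the set of $\mathbb{Z}/m\mathbb{Z}$-fixed simple $\mathcal{A}_0$-modules so that the counts match on the nose.
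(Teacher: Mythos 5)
Your proposal follows essentially the same route as the paper. The identification $\tilde{\chi}_M = \phi(\alpha_M^{(m-1)})$ is exactly the paper's observation $\phi(\alpha_M^{(-1)}) = \tilde{\chi}_M$ (since $-1 \equiv m-1 \pmod m$), and once that is in hand the two displayed inner product formulas are read off from equations $(1)$ and $(2)$ at $r = m-1$, just as you say.

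For the basis assertion the paper also uses linear independence from orthogonality plus a dimension count; the count is Corollary $4.16$, which the paper proves via Lemmas $4.12$–$4.15$. Your sketch of that count leaves a real gap where you acknowledge one: Lemma $4.6$ (you wrote $4.5$, but you mean the $Z_{\mathcal B}(\mathcal A)\cong\bigoplus_{U,V}\mathrm{End}\bigl(\mathrm{Hom}_{\mathcal B}(U,V)\bigr)$ lemma) decomposes the full centralizer $Z_{\mathcal{A}_0}(\mathcal{A})$ without any reference to the $\Z/m\Z$-grading, so there is no immediate way to ``read off the $(m-1)$-graded component'' from it. The paper supplies exactly that missing step: Lemma $4.12$ transports $cf_{\mathcal A_0}(\mathcal A_1)$ to $Z_{\mathcal A_0}(\mathcal A_{-1})$, Lemma $4.13$ computes $\dim Z_{\mathcal A_0}(\mathcal A_{-1}) = |\mathrm{Sim}(Z(\mathcal A_0))^{\Z/m\Z}|$ by analyzing the $\Z/m\Z$-graded $\star$-algebra $Z_{\mathcal{A}_0}(\mathcal{A})$ over its commutative zero piece $Z(\mathcal A_0)$, and Lemma $4.15$ gives the bijection $\mathrm{Sim}(\mathcal A_0)^{\Z/m\Z}\leftrightarrow\mathrm{Sim}(Z(\mathcal A_0))^{\Z/m\Z}$. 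Without Lemma $4.13$ (or a substitute), your count of multiplicity spaces $\mathrm{Hom}_{\mathcal A_0}(U,\tilde M)$ over the $m$ extensions $\tilde M,\ldots,\tilde M_{\omega^{m-1}}$ only bounds the ungraded $Z_{\mathcal A_0}(\mathcal A)$ and does not pin down the single graded piece you need. So the plan is sound and matches the paper in outline, but the basis part should be completed by invoking (or reproving) Lemmas $4.12$, $4.13$, $4.15$ and Corollary $4.16$.
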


Before proving the Theorem 4.11 we will prove a small lemma that gives us a more information about the space of twisted class functionals on $A_{1}$.

\begin{lemma}
$\phi^{-1}(cf_{\mathcal{A}_{0}} (\mathcal{A}_{1}))= Z_{\mathcal{A}_{0}}(\mathcal{A}_{-1})$.
\end{lemma}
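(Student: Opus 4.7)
The plan is to adapt the proof of Lemma~1.4 to the graded setting, with the grading being used to pin down in which component $\mathcal{A}_r$ the preimage $\phi^{-1}(\gamma)$ must live. The two main ingredients are: (a) the fact that because $\lambda$ is supported on $\mathcal{A}_0$, the Frobenius isomorphism $\phi$ is homogeneous of a definite degree, so $\phi(\mathcal{A}_{-1})$ is precisely the image of $\mathcal{A}_1^\ast$ under extension-by-zero; and (b) the fact that $\phi$ is an $(\mathcal{A},\mathcal{A})$-bimodule isomorphism, as recorded in Remark~2.2.

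First I would record the grading observation explicitly. For $a \in \mathcal{A}_r$ and $x \in \mathcal{A}_s$, we have $\phi(a)(x) = \lambda(xa)$, which vanishes unless $s+r = 0$; hence $\phi$ sends $\mathcal{A}_{-1}$ bijectively onto the subspace $\mathrm{ext}(\mathcal{A}_1^\ast) \subseteq \mathcal{A}^\ast$ of functionals on $\mathcal{A}$ vanishing on every homogeneous component except $\mathcal{A}_1$. Given $\gamma \in cf_{\mathcal{A}_0}(\mathcal{A}_1)$, I regard $\gamma$ as an element of $\mathcal{A}^\ast$ by extending it by zero outside $\mathcal{A}_1$; then $\phi^{-1}(\gamma)$ automatically lies in $\mathcal{A}_{-1}$.

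Next I would translate the twisted class-function condition into a bimodule identity, exactly as in Lemma~1.4. For $a \in \mathcal{A}_0$, one has $(a\cdot\gamma)(m) = \gamma(ma)$ and $(\gamma\cdot a)(m) = \gamma(am)$. Since $\mathcal{A}_0 \cdot \mathcal{A}_s \subseteq \mathcal{A}_s$, both $am$ and $ma$ sit in the same component as $m$, so extending $\gamma$ by zero is compatible with left/right multiplication by $\mathcal{A}_0$. Thus $\gamma \in cf_{\mathcal{A}_0}(\mathcal{A}_1)$ is equivalent to $a\cdot\gamma = \gamma\cdot a$ as elements of $\mathcal{A}^\ast$, for every $a \in \mathcal{A}_0$. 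Applying the bimodule isomorphism $\phi^{-1}$, this becomes $a\cdot\phi^{-1}(\gamma) = \phi^{-1}(\gamma)\cdot a$ for every $a \in \mathcal{A}_0$, i.e.\ $\phi^{-1}(\gamma) \in Z_{\mathcal{A}_0}(\mathcal{A})$. Intersecting with the graded component $\mathcal{A}_{-1}$ and using Remark~4.5 gives $\phi^{-1}(\gamma) \in Z_{\mathcal{A}_0}(\mathcal{A}_{-1})$.

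Finally I would run the argument backwards: any $b \in Z_{\mathcal{A}_0}(\mathcal{A}_{-1})$ has $\phi(b) \in \mathrm{ext}(\mathcal{A}_1^\ast)$ by the grading observation, and the centralizer condition on $b$ translates (via the bimodule property of $\phi$) into $a\cdot\phi(b) = \phi(b)\cdot a$ for $a \in \mathcal{A}_0$, which restricted to $\mathcal{A}_1$ is precisely the twisted class-function condition on $\phi(b)|_{\mathcal{A}_1}$. The only subtlety I anticipate is the first bookkeeping step, namely verifying carefully that extension-by-zero respects the $\mathcal{A}_0$-bimodule structure on functionals; once that is set, the rest is a direct transcription of Lemma~1.4 with the extra grading constraint.
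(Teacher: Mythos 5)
Your argument is correct and follows the same route the paper takes: the paper's proof simply cites the fact that $\phi$ is an $(\mathcal{A},\mathcal{A})$-bimodule isomorphism (as in Lemma~1.4), and your write-up is just a careful expansion of that, with the additional (correct) bookkeeping that $\lambda$ being supported on $\mathcal{A}_0$ forces $\phi(\mathcal{A}_{-1})$ to be exactly the functionals supported on $\mathcal{A}_1$. Nothing further is needed.
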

\begin{proof}
Lemma follows from the the fact that \p\ is a \A -bimodule isomorphism between \A\ and \A$^{*}$. 
\end{proof}

Let $Z_{\mathcal{A}_{0}}(\mathcal{A})=\{a \in \mathcal{A} : ab=ba \ \forall\  b \in \mathcal{A}_{0}\}= \bigoplus_{r \in \Z/m\Z} Z_{\mathcal{A}_{0}}(\mathcal{A}_{r})\subseteq$ \A\ is a $\Z/m\Z$-graded Frobenious $\star$-algebra, where $\star$ and $\lambda_{0}$ are the restrictions of those for \A. 
The 0$^{th}$-grade of $Z_{\mathcal{A}_{0}}(\mathcal{A})$ is $Z(\mathcal{A}_{0})$ which is a commutative semisimple algebra. 
Hence every simple module over $Z(\mathcal{A}_{0})$ is one dimensional.
By lemma 4.3 we see that null socle of $E\otimes_{Z(\mathcal{A}_{0})}Z_{\mathcal{A}_{0}}(\mathcal{A})$ is zero for all $E \in $ Sim($Z(\mathcal{A}_{0})$).
So $E\overline{\otimes}Z_{\mathcal{A}_{0}}(\mathcal{A})= E\otimes_{Z(\mathcal{A}_{0})}Z_{\mathcal{A}_{0}}(\mathcal{A})$ for all $E \in $ Sim$(Z(\mathcal{A}_{0}))$.

As $Z(\mathcal{A}_{0})$ is a $\C$-vector subspace of $\mathcal{A}_{0}$ spanned by $\{ e_{M} : M \in Sim(\mathcal{A}_{0})\}$.
For every $M\in$ Sim$(\mathcal{A}_{0})$, $\C e_{M}$ is a simple $Z(\mathcal{A}_{0})$ module.
All simple $Z(\mathcal{A}_{0})$ are given by this way. 
We denote $\C e_{M}$ by $E_{M}$ for each $M \in$ Sim(\A$_{0}$).

Now we get a partial action of $\Z/m\Z$ on Sim$(Z(\mathcal{A}_{0}))$.
This action is given by for $r \in \Z/m\Z,\  E_{M}^{(r)} = E_{M}\otimes Z_{\mathcal{A}_{0}}(\mathcal{A}_{r})$, for all $E_{M}\in$ Sim$(Z(\mathcal{A}_{0}))$.
\begin{lemma}[see \cite{ta} Lemma 3.3]
For every $E_{M} \in$ Sim($Z(\mathcal{A}_{0})$), $E_{M} ^{(r)} = 0$ or $E_{M}^{(r)}\cong E_{M}$, 
Moreover $Z_{\mathcal{A}_{0}}(\mathcal{A}_{r})=\oplus_{E_{M} ^{(r)} =  E_{M}} e_{M}Z_{\mathcal{A}_{0}}(\mathcal{A}_{r}) \cong \oplus_{E_{M} = E_{M}^{(r)}} E_{M}$.
\end{lemma}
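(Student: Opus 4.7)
The plan is to combine the Clifford-theoretic tools of Section 3 with the null socle vanishing from the graded Frobenius $\star$-structure on $Z_{\mathcal{A}_0}(\mathcal{A})$ established just above the statement. The key conceptual point is that $Z(\mathcal{A}_0)$, being commutative semisimple, has only one-dimensional simple modules (namely the $E_M$'s), so any simple graded piece appearing in a Clifford-type induction from $Z(\mathcal{A}_0)$ to $Z_{\mathcal{A}_0}(\mathcal{A})$ is automatically one-dimensional.

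First I would identify $E_M^{(r)}$ concretely. Since $Z_{\mathcal{A}_0}(\mathcal{A})$ is itself a $\Z/m\Z$-graded Frobenius $\star$-algebra and, as already noted, $E_M \bar{\otimes}\, Z_{\mathcal{A}_0}(\mathcal{A}) = E_M \otimes_{Z(\mathcal{A}_0)} Z_{\mathcal{A}_0}(\mathcal{A})$, the $r$-component of the partial action is
\[
E_M^{(r)} \;=\; E_M \otimes_{Z(\mathcal{A}_0)} Z_{\mathcal{A}_0}(\mathcal{A}_r).
\]
Writing $E_M = Z(\mathcal{A}_0)/\mathfrak{m}_M$ with $\mathfrak{m}_M = \bigoplus_{N \neq M}\C e_N$, and using centrality of each $e_N$ in $Z_{\mathcal{A}_0}(\mathcal{A})$, this tensor product becomes canonically $e_M Z_{\mathcal{A}_0}(\mathcal{A}_r)$. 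On this space $e_N$ acts as the scalar $\delta_{NM}$, so $E_M^{(r)}$ is $E_M$-isotypic as a $Z(\mathcal{A}_0)$-module.

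For the dimension bound I would apply Lemma 3.8 to $Z_{\mathcal{A}_0}(\mathcal{A})$ with trivial subgroup $H = \{0\}$: this gives that $E_M \bar{\otimes}\, Z_{\mathcal{A}_0}(\mathcal{A})$ is a simple $\Z/m\Z$-graded $Z_{\mathcal{A}_0}(\mathcal{A})$-module. Corollary 3.7 then forces every nonzero graded piece $E_M^{(r)}$ to be simple as a $Z(\mathcal{A}_0)$-module, hence one-dimensional. Combined with the $E_M$-isotypic property from the previous paragraph, $E_M^{(r)}$ is either $0$ or isomorphic to $E_M$. The decomposition $Z_{\mathcal{A}_0}(\mathcal{A}_r) = \bigoplus_N e_N Z_{\mathcal{A}_0}(\mathcal{A}_r)$ is just the $Z(\mathcal{A}_0)$-isotypic decomposition arising from $1 = \sum_N e_N$; discarding the zero summands (those indices $N$ with $E_N^{(r)} = 0$) yields exactly the formula in the statement.

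The main subtle step I would expect to receive scrutiny is the last reconciliation: Corollary 3.7 by itself would only tell us $E_M^{(r)}$ is \emph{some} simple $Z(\mathcal{A}_0)$-module, not specifically $E_M$. The explicit identification $E_M^{(r)} \cong e_M Z_{\mathcal{A}_0}(\mathcal{A}_r)$ coming from the tensor product is what pins down the correct $Z(\mathcal{A}_0)$-isotypic label, and this is the conceptual bridge between the abstract Clifford theory of Section 3 and the concrete combinatorics of the central idempotents $e_M$ indexed by $\mbox{Sim}(\mathcal{A}_0)$.
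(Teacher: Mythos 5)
Your argument is correct, but note that the paper itself offers no proof of this lemma: it is quoted from \cite{ta}, Lemma 3.3, so there is no internal proof to compare against. What you give is a legitimate self-contained derivation from the paper's own toolkit, and the logic holds up at each step: the identification $E_M^{(r)} = E_M\otimes_{Z(\mathcal{A}_0)}Z_{\mathcal{A}_0}(\mathcal{A}_r)\cong e_M Z_{\mathcal{A}_0}(\mathcal{A}_r)$ is valid because $e_M$ is an idempotent of $Z(\mathcal{A}_0)$ and is central in $Z_{\mathcal{A}_0}(\mathcal{A})$ (every element of the centralizer commutes with $e_M\in\mathcal{A}_0$); the computation $e_N\cdot e_M x = e_N e_M x=\delta_{NM}e_M x$ correctly shows each graded piece is $E_M$-isotypic, which, as you rightly flag, is the extra input that abstract Clifford theory alone would not give; the null-socle vanishing for the graded Frobenius $\star$-algebra $Z_{\mathcal{A}_0}(\mathcal{A})$ (Lemma 4.3, already invoked in the text just before the statement) turns $E_M\overline{\otimes}Z_{\mathcal{A}_0}(\mathcal{A})$ into $E_M\otimes_{Z(\mathcal{A}_0)}Z_{\mathcal{A}_0}(\mathcal{A})$, so Lemma 3.8 (with trivial $H$) plus Corollary 3.7 indeed force each nonzero $E_M^{(r)}$ to be a simple, hence one-dimensional, $Z(\mathcal{A}_0)$-module; and the final decomposition $Z_{\mathcal{A}_0}(\mathcal{A}_r)=\oplus_N e_N Z_{\mathcal{A}_0}(\mathcal{A}_r)$ follows from $1=\sum_N e_N$ together with the stability of $Z_{\mathcal{A}_0}(\mathcal{A}_r)$ under multiplication by each $e_N$. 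The only caveat worth recording is notational: Section 3 is written for right modules while Section 4 speaks of left modules, but since the relevant elements ($e_N$ and elements of $Z_{\mathcal{A}_0}(\mathcal{A}_r)$) commute with one another, the sidedness is immaterial here; this ambiguity is inherited from the paper, not introduced by you.
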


\begin{remark}
As a corollary to above lemma we get that dimension of $Z_{\mathcal{A}_{0}}(\mathcal{A}_{-1})$ is the cardinality of the set $Sim(Z(\mathcal{A}_{0}))^{\Z/m\Z}$.
\end{remark}

 \begin{lemma}
 There is a bijection between the sets Sim$(\mathcal{A}_{0}) ^{\Z/m\Z}$ and

 Sim$(Z_{\mathcal{A}_{0}}(\mathcal{A}_{0})) ^{\Z/m\Z}$.

 \end{lemma}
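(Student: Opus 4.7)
The plan is to exhibit the bijection via the natural assignment $U \mapsto E_U := \mathbb{C} e_U$, which is a bijection $\mathrm{Sim}(\mathcal{A}_0) \to \mathrm{Sim}(Z_{\mathcal{A}_0}(\mathcal{A}_0))$ by Corollary 4.7 together with the semisimplicity of $\mathcal{A}_0$ (Remark 4.2 and Theorem 2.4). It then suffices to show this bijection is equivariant for the two $\mathbb{Z}/m\mathbb{Z}$-actions, so that it restricts to a bijection on fixed-point sets.

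Unwrapping the fixed-point conditions, $U \in \mathrm{Sim}(\mathcal{A}_0)^{\mathbb{Z}/m\mathbb{Z}}$ iff $U^r \cong U$ for every $r \in \mathbb{Z}/m\mathbb{Z}$, while Lemma 4.13 gives $E_U \in \mathrm{Sim}(Z(\mathcal{A}_0))^{\mathbb{Z}/m\mathbb{Z}}$ iff $E_U^{(r)} \neq 0$ for every $r$, equivalently $e_U Z_{\mathcal{A}_0}(\mathcal{A}_r) \neq 0$ for every $r$. The proof therefore reduces to the per-degree statement: for each fixed $r \in \mathbb{Z}/m\mathbb{Z}$,
\[
e_U Z_{\mathcal{A}_0}(\mathcal{A}_r) \neq 0 \iff U^r \cong U.
\]

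To establish this, I would decompose $\mathcal{A}_r$ as an $(\mathcal{A}_0,\mathcal{A}_0)$-bimodule via the central orthogonal idempotents, $\mathcal{A}_r = \bigoplus_{U',U''} e_{U'} \mathcal{A}_r e_{U''}$. Since $e_U$ is central in $\mathcal{A}_0$, the space $e_U Z_{\mathcal{A}_0}(\mathcal{A}_r)$ sits inside the diagonal block $e_U \mathcal{A}_r e_U$ as the $\mathrm{End}(U)$-bimodule centralizer there; because every bimodule over the simple algebra $\mathrm{End}(U)$ is a direct sum of copies of the unique simple bimodule $U \otimes U^*$ (which has one-dimensional centralizer), this centralizer is nonzero iff $e_U \mathcal{A}_r e_U \neq 0$ itself. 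To detect the nonvanishing of $e_U \mathcal{A}_r e_U$ in terms of $U^r$, I would apply Lemma 4.3(ii) to the $\mathcal{A}_0$-module $e_U \mathcal{A}_0 \cong U^{\oplus \dim U}$, which gives $e_U \mathcal{A} = (e_U \mathcal{A}_0) \otimes_{\mathcal{A}_0} \mathcal{A} = (e_U \mathcal{A}_0) \bar{\otimes} \mathcal{A} \cong (U \bar{\otimes} \mathcal{A})^{\oplus \dim U}$ as graded $\mathcal{A}$-modules, hence $(e_U \mathcal{A})_r \cong (U^r)^{\oplus \dim U}$. Right multiplication by $e_U$ then projects onto the $U$-isotypic component, which is nonzero precisely when $U^r \cong U$ (using Lemma 3.8 that $U^r$ is either zero or simple).

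The main obstacle is the middle chain of identifications tying the algebraic block $e_U \mathcal{A}_r e_U$ to the Clifford-theoretic piece $U^r$; the crucial input is Lemma 4.3(ii), which collapses the induced module to the ordinary tensor product and lets us read off the $r$-graded piece of $e_U \mathcal{A}$ as exactly $(U^r)^{\oplus \dim U}$.
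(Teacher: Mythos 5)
Your proof is correct, but it is a genuinely different route from the paper's. The paper proves the lemma through the full centralizer $Z_{\mathcal{A}_0}(\mathcal{A})$: it uses Lemma 4.6 to realize simple $Z_{\mathcal{A}_0}(\mathcal{A})$-modules as multiplicity spaces $\mathrm{Hom}_{\mathcal{A}_0}(U,M)$, sends an invariant $U$ to an extension $\tilde U$ supplied by the Clifford-theoretic equivalence of Section 3, notes that $\mathrm{Hom}_{\mathcal{A}_0}(U,\tilde U)$ restricts on $Z(\mathcal{A}_0)$ to $\mathrm{Hom}_{\mathcal{A}_0}(U,U)$, and for the converse counts the $m$ mutually non-isomorphic lifts of an invariant central character to $Z_{\mathcal{A}_0}(\mathcal{A})$, which yield $m$ distinct simple $\mathcal{A}$-modules lying over $U$ and hence force $U$ to be invariant. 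You instead work degree by degree: invariance of $E_U=\C e_U$ becomes $e_U Z_{\mathcal{A}_0}(\mathcal{A}_r)\neq 0$ for all $r$ (using the dichotomy of Lemma 4.13), invariance of $U$ becomes $U^{r}\cong U$ for all $r$, and the two are linked by identifying $e_U Z_{\mathcal{A}_0}(\mathcal{A}_r)$ with the centralizer of the simple block $e_U\mathcal{A}_0\cong \mathrm{End}(U)$ acting on the bimodule $e_U\mathcal{A}_r e_U$ (nonzero iff $e_U\mathcal{A}_r e_U\neq 0$, since every bimodule over a matrix algebra is a sum of copies of the simple one), and then computing $(e_U\mathcal{A})_r\cong (U^{r})^{\oplus\dim U}$ via Lemma 4.3(ii) and Theorem 3.4, so that right multiplication by $e_U$ detects exactly $U^{r}\cong U$. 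Your version is more elementary and in fact gives slightly more, namely that the bijection matches the stabilizers $\{r: U^r\cong U\}$ and $\{r: E_U^{(r)}\cong E_U\}$ degree by degree, but it leans on Lemma 4.13 (quoted from \cite{ta}) for the dichotomy on the $Z(\mathcal{A}_0)$ side, which the paper's proof avoids; conversely, the paper's argument also sets up the matching of the $m$ extensions on the two sides, which is what later feeds into Theorem 4.11. One bookkeeping caveat: the induction and partial action of Section 3 are stated for right modules while $e_U$ and $\mathrm{Sim}(\mathcal{A}_0)$ are set up for left modules, so your identification $e_U\mathcal{A}_0\cong U^{\oplus \dim U}$ silently switches sides; the paper is equally loose here, but you should fix one convention and state it.
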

 
 \begin{proof}
 Let U $\in$ Sim$(\mathcal{A}_{0}) ^{\Z/m\Z}$, by using results from section 3 we get a simple \A -module $\tilde{U}$ such that $U \cong \tilde{U}$ as a representation of $\mathcal{A}_{0}$. 
 By Lemma 4.6 Hom$_{\mathcal{A}_{0}}(U, \tilde{U})$ is a simple $Z_{\mathcal{A}_{0}}(\mathcal{A})$-module.
 But the action of $Z_{\mathcal{A}_{0}}(\mathcal{A}_{0})$ on Hom$_{\mathcal{A}_{0}}(U, \tilde{U})$ is by a central character of $\mathcal{A}_{0}$, so Hom$_{\mathcal{A}_{0}}(U, \tilde{U})$ is isomorphic to Hom$_{\mathcal{A}_{0}}(U,U)$ as representation of $Z_{\mathcal{A}_{0}}(\mathcal{A}_{0})$.
 Therefore Hom$_{\mathcal{A}_{0}}(U, U)\in \mbox{Sim}(Z_{\mathcal{A}_{0}}(\mathcal{A}_{0}))^{\Z/m\Z}$.
 
 Now assume that Hom$_{\mathcal{A}_{0}}(U,U) \in \mbox{Sim}(Z_{\mathcal{A}_{0}}(\mathcal{A}_{0}))^{\Z/m\Z} i.e.$
  Hom$_{\mathcal{A}_{0}}(U,U)$ is a one dimensional representation of $Z_{\mathcal{A}_{0}}(\mathcal{A}_{0})$.
  By using results from section 3 and  Lemma 4.6, there exist a $M\in$Sim($\mathcal{A}$) such that  Hom$_{\mathcal{A}_{0}}(U,M)$ is  one dimensional representation of $Z_{\mathcal{A}_{0}}(\mathcal{A})$ and  Hom$_{\mathcal{A}_{0}}(U,M)\cong  \mbox{Hom}_{\mathcal{A}_{0}}(U,U)$ as a representation of $Z_{\mathcal{A}_{0}}(\mathcal{A}_{0}) $. 
  But the extension of representation of $Z_{\mathcal{A}_{0}}(\mathcal{A}_{0})$ in $\mbox{Sim}(Z_{\mathcal{A}_{0}}(\mathcal{A}_{0}))^{\Z/m\Z} $ to an irreducible representation of $Z_{\mathcal{A}_{0}}(\mathcal{A})$ has $m$-choices and all such representations of $Z_{\mathcal{A}_{0}}(\mathcal{A})$ are mutually non-isomorphic.
  So there are $m$ distinct simple \A -modules denote by $M_{1}, M_{2},\cdots,M_{m}$ such that 
  Hom$_{\mathcal{A}_{0}}(U,M_{r}) \cong \mbox{Hom}_{\mathcal{A}_{0}}(U,U)$ as a representation of $Z_{\mathcal{A}_{0}}(\mathcal{A}_{0})$ and
   Hom$_{\mathcal{A}_{0}}(U,M_{r}) \ncong \mbox{Hom}_{\mathcal{A}_{0}}(U,M_{s})$ as a representation of $Z_{\mathcal{A}_{0}}(\mathcal{A})$ for $r\neq s$.
  Therefore for each $r$, $M_{r} \cong U$ as a representation of $\mathcal{A}_{0}$, So there are $m$ extensions for $U$ as a representation of \A.
  Hence $U \in \mbox{Sim}(\mathcal{A}_{0}) ^{\Z/m\Z}$. 
 \end{proof}

\begin{corollary}
 Dimension of the space of twisted class functionals on $\mathcal{A}_{1}$ is a  cardinality of the set Sim$(\mathcal{A}_{0})^{\Z/m\Z} $.
\end{corollary}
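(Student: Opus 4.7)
The plan is to assemble the corollary as a short chain of equalities, invoking each of the preceding results in turn. First I would use Lemma 4.12 to translate the question from the space $cf_{\mathcal{A}_0}(\mathcal{A}_1)$ of twisted class functionals into the concrete subspace $Z_{\mathcal{A}_0}(\mathcal{A}_{-1})$ of $\mathcal{A}$. Since $\phi$ is a linear isomorphism between $\mathcal{A}$ and $\mathcal{A}^{*}$, this immediately gives
\[
\dim cf_{\mathcal{A}_0}(\mathcal{A}_1) \;=\; \dim Z_{\mathcal{A}_0}(\mathcal{A}_{-1}).
\]

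Next I would apply Remark 4.14, which identifies $\dim Z_{\mathcal{A}_0}(\mathcal{A}_{-1})$ with the cardinality of $\mathrm{Sim}(Z(\mathcal{A}_0))^{\Z/m\Z}$. By definition $Z(\mathcal{A}_0)=Z_{\mathcal{A}_0}(\mathcal{A}_0)$, so the same cardinality equals $|\mathrm{Sim}(Z_{\mathcal{A}_0}(\mathcal{A}_0))^{\Z/m\Z}|$. Finally, Lemma 4.15 provides a $\Z/m\Z$-equivariant bijection between $\mathrm{Sim}(\mathcal{A}_0)^{\Z/m\Z}$ and $\mathrm{Sim}(Z_{\mathcal{A}_0}(\mathcal{A}_0))^{\Z/m\Z}$, which completes the chain:
\[
\dim cf_{\mathcal{A}_0}(\mathcal{A}_1) \;=\; \bigl|\mathrm{Sim}(Z_{\mathcal{A}_0}(\mathcal{A}_0))^{\Z/m\Z}\bigr| \;=\; \bigl|\mathrm{Sim}(\mathcal{A}_0)^{\Z/m\Z}\bigr|.
\]

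The main obstacle, if any, is conceptual rather than computational: one must check that the two partial actions of $\Z/m\Z$ being compared (the one on $\mathrm{Sim}(Z(\mathcal{A}_0))$ coming from the grading of $Z_{\mathcal{A}_0}(\mathcal{A})$, and the one on $\mathrm{Sim}(\mathcal{A}_0)$ coming from the grading of $\mathcal{A}$) correspond under the central-character identification $U\mapsto E_U=\C e_U$ used implicitly in Lemma 4.15. Once this compatibility is recognized, the proof itself is just a one-line citation of the three preceding facts, which is why the corollary fits cleanly right after Lemma 4.15 with essentially no further argument required.
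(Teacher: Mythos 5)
Your chain of identifications -- Lemma 4.12 to pass from $cf_{\mathcal{A}_0}(\mathcal{A}_1)$ to $Z_{\mathcal{A}_0}(\mathcal{A}_{-1})$ via $\phi$, Remark 4.14 to count its dimension by $\lvert \mathrm{Sim}(Z(\mathcal{A}_0))^{\Z/m\Z}\rvert$, and Lemma 4.15 (with $Z(\mathcal{A}_0)=Z_{\mathcal{A}_0}(\mathcal{A}_0)$) to convert this to $\lvert \mathrm{Sim}(\mathcal{A}_0)^{\Z/m\Z}\rvert$ -- is exactly the argument the paper intends, which it leaves implicit by placing the corollary directly after Lemma 4.15. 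Your proposal is correct and takes essentially the same approach, and your remark about checking compatibility of the two partial $\Z/m\Z$-actions is a reasonable point of care that the paper glosses over.
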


\begin{proof}{\textbf{Proof of Theorem 4.11.}}

The first part of theorem is follows from (1) and (2) and the fact that $\phi(\alpha_{M}^{(-1)}) =\tilde{ \chi_{M}}.$ . 
The second part will follows from first part and corollary 4.16. 
\end{proof}

\section*{Acknowledgement}
I would like to thank Tanmay Deshpande for various comments, suggestions and useful discussions while working of this article.


\begin{thebibliography}{9}
\bibitem[Da]{Cliffordtheory}
\ Everett C. Dade.
\textit{Clifford theory for group-graded rings,}
Journal für die reine und angewandte Mathematik, 
Vol.369, 1986.
 
 \bibitem[De]{ta}
\ T. Deshpande.
\textit{On centers of bimodule categories and induction-restriction functors,}
arXiv:1611.04486.
\bibitem[Fo]{frobeniusalgebra} 
\ T. V. Fossum.
\textit{Characters and Orthogonality In Frobenius Algebras,}
Pacific Journal Of Mathematics, Vol.36, No.1, 1971.
 
\bibitem[Lu]{BasedRings} 
\ G. Lusztig.
\textit{Leading Coefficients of Character Values of Hecke Algebra,}
The Arcata Conference On Representations of Finite Groups, Vol.47-Part 2.



\end{thebibliography}
\end{document}